\newif\ifArXiV
\newenvironment{frontmatter}{}{}
\let\address\affil
\newenvironment{keyword}{\small \textbf{Keywords: }}{}
\newtheorem{definition}{Definition}
\newtheorem{proposition}{Proposition}
\newcommand{\mytag}[1]{(\hypertarget{#1}{#1})}
\newcommand{\myref}[1]{\textnormal{(\hyperlink{#1}{#1})}}
\definecolor{gris}{gray}{0.5} 
\newcommand{\OP}{OP\xspace}
\newcommand{\OIG}{OIG\xspace}
\newcommand{\OIGB}{OIGB\xspace}
\newcommand{\OIGS}{OIGS\xspace}
\newcommand{\IG}{IG\xspace}
\newcommand{\IGs}{IGs\xspace}
\renewcommand{\paragraph}[1]{{\vskip2mm\noindent \bf #1}\hspace{0.3cm}}
\begin{document}
\begin{frontmatter}

\title{Competing for the most profitable tour: The orienteering interdiction game}
\author[1,2]{Eduardo \'Alvarez-Miranda\thanks{ealvarez@utalca.cl}}
\author[3]{Markus Sinnl\thanks{markus.sinnl@jku.at}}
\author[4]{K\"ubra Tan{\i}nm{\i}\c{s}\thanks{ktaninmis@ku.edu.tr}}
\date{}

\address[1]{Department of Industrial Engineering, Faculty of Engineering, Universidad de Talca, Sede Curic\'o, Chile}
\address[2]{Instituto Sistemas Complejos de Ingenier\'{\i}a, Chile}
\address[3]{Institute of Business Analytics and Technology Transformation/JKU Business School, Johannes Kepler University Linz, 4040 Linz, Austria}
\address[4]{Department of Industrial Engineering, Ko\c{c} University, 34742 \.{I}stanbul, Turkey}

\maketitle	

\begin{abstract}
The orienteering problem is a well-studied and fundamental problem in transportation science. In the problem, we are given a graph with prizes on the nodes and lengths on the edges, together with a budget on the overall tour length. The goal is to find a tour that respects the length budget and maximizes the collected prizes. In this work, we introduce the orienteering interdiction game, in which a competitor (the leader) tries to minimize the total prize that the follower can collect within a feasible tour. To this end, the leader interdicts some of the nodes so that the follower cannot collect their prizes. 
The resulting interdiction game is formulated as a bilevel optimization problem, and a single-level reformulation is obtained based on interdiction cuts. A branch-and-cut algorithm with several enhancements, including the use of a solution pool, a cut pool and a heuristic method for the follower's problem, is proposed. In addition to this exact approach, a genetic algorithm is developed to obtain high-quality solutions in a short computing time. In a computational study based on instances from the literature for the orienteering problem, the usefulness of the proposed algorithmic components is assessed, and the branch-and-cut and genetic algorithms are compared in terms of solution time and quality.

\end{abstract}

\begin{keyword}
Interdiction games, Orienteering problems, Bi-level optimization 
\end{keyword}

\end{frontmatter}

\section{Introduction and problem definition}
\label{sec:intro}

In recent years, \emph{interdiction games (\IGs)} have received considerable attention in the logistics literature, see, e.g., the surveys \citep{smith2020survey}, \citep[Chapter 6]{kleinert2021survey} and \citep[Chapter 4.2]{beck2023survey}. \IGs involve two decision makers, usually called the \emph{leader} and the \emph{follower} who compete in a hierarchical manner. The follower solves an optimization problem defined over a set of \emph{assets} such as facilities or arcs on a network that the leader can interdict within an \emph{interdiction budget}. Depending on the concrete setting of the \IG, interdicting an asset can either deprecate its value for the follower or completely destroy the asset making it unusable for the follower. The goal of the leader is to choose the assets to interdict in such a way as to maximize the deterioration of the follower's objective function value. Interdiction games find applications in various areas such as marketing \citep{denegre2011interdiction}, identifying and defending critical infrastructure \citep{church2004identifying, brown2006defending}, as well as conservation planning \citep{sefair2017defender}. 
Network interdiction is an important class of \IGs, involving the interdiction of some network components at the upper level. Early examples in this area include the interdiction of flows on arcs \citep{wollmer1964removing, wood1993deterministic}, and the interdiction of shortest paths \citep{israeli2002shortest}. More recent works address, for example, multi-commodity flow interdiction \citep{lim2007algorithms}, traveling salesman problem with interdiction \citep{lozano2017solving}, and maximum clique interdiction \citep{furini2019maximum}. \cite{smith2020survey} provides a comprehensive survey on network interdiction problems.

In this work, we consider an interdiction version of the well-known \emph{orienteering problem} (\OP).
In the \OP we are given a graph with prizes on the nodes and lengths on the edges, along with a budget for the overall length of the tour and a depot node.
The goal is to find a tour that respects the length budget, passes through the depot, and maximizes the prizes collected (a node prize is collected if the node is part of the tour) \citep{golden1987orienteering}. As a result, the \OP is a combination of the knapsack problem \citep{dantzig1957discrete} and the traveling salesperson problem \citep{dantzig1954solution}, and it is also known as the selective traveling salesperson problem \citep{laporte1990selective}. This fundamental problem in logistics and transportation is NP-hard and has spawned countless variants and generalizations such as the team \OP \citep{chao1996team}, \OP with time windows \citep{labadie2012team}, and the stochastic \OP \citep{ilhan2008orienteering}. For an overview on the \OP, we refer to the surveys \citep{vansteenwegen2011orienteering,gunawan2016orienteering}.

In the interdiction version of the \OP, which we call the \emph{orienteering interdiction game} (\OIG), initially the leader interdicts some of the nodes within a budget. Then, the follower solves an \OP where it is not possible to collect prizes from the interdicted nodes. The aim of the leader is to choose the nodes to interdict in such a way that the maximum possible prize that the follower collects is minimized. 
A formal definition of \OIG is given in the following.

\begin{definition}[Orienteering interdiction game (\OIG)]
We are given an undirected complete graph $G=(V,E)$ with node set $V$, an edge set $E$, a prize $p_i>0$ associated with each node $i\in V$, and a length $d_{e}$ associated with each edge $e\in E$. 
When a node $i\in V$ is interdicted by the leader, its prize is captured by the leader and cannot be collected by the follower even if node $i$ is visited in the follower tour.
Therefore, the leader's goal is to interdict a subset of nodes, whose total cost does not exceed an interdiction budget ($Q_\ell$), so it minimizes the maximum profit (that is, the sum of the prizes of the nodes in the tour) that the follower can achieve by performing a tour, starting at the depot $\rho_f \in V$, with a total distance not exceeding a total distance budget $B_f$.
\end{definition}
An illustration of the \OIG is presented in Figure \ref{fig:bayg29}, on the \texttt{bayg29} instance of TSPLIB95 (\url{http://comopt.ifi.uni-heidelberg.de/software/TSPLIB95/}). The instance involves 29 cities in Bavaria and connections between all city pairs. We arbitrarily determine city 1 as the depot node and consider unit prizes, unit interdiction costs, and three levels of the interdiction budget, i.e., $Q_\ell\in \{0,5,8\}$. 
Note that with an interdiction budget of zero the obtained problem is just the OP without interdiction. As budget we used $B_f = 0.5\nu$ where $\nu$ denotes the optimal
TSP tour length which is provided with the instances.
In the figure, the optimal leader solution consists of the green nodes, and the resulting follower tour is shown in each figure. The total prizes collected by the follower are 16, 12, and 11, respectively. 

\begin{figure}[htb]
    \centering
    \begin{subfigure}{0.3\textwidth}
    \includegraphics[width=\textwidth]{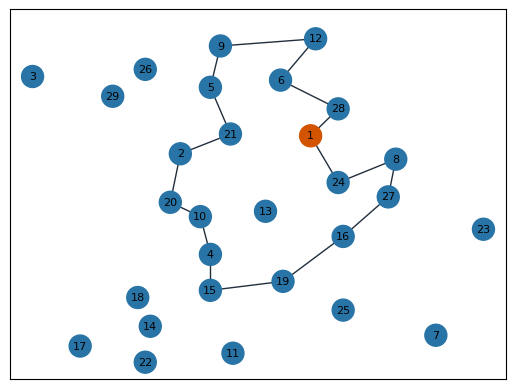}
    \caption{$Q_\ell=0$ (i.e., the OP without interdiction)}
     \end{subfigure}
        \begin{subfigure}{0.3\textwidth}
    \includegraphics[width=\textwidth]{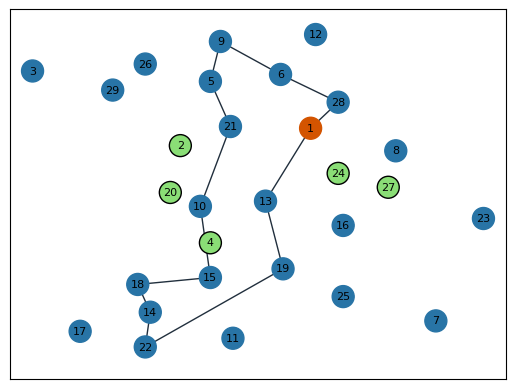}
      \caption{$Q_\ell=5$ \newline}
         \end{subfigure}
    \begin{subfigure}{0.3\textwidth}
    \includegraphics[width=\textwidth]{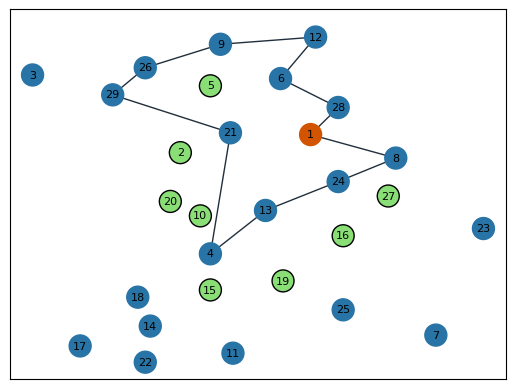}
          \caption{$Q_\ell=8$ \newline}
             \end{subfigure}
    \caption{The \texttt{bayg29} instance from TSPLIB95 library, with a given depot node (orange) and  considering unit prizes and unit interdiction costs. The interdicted nodes (green) and the resulting follower tour are shown under different leader budget $Q_\ell$.}
    \label{fig:bayg29}
\end{figure}

\OIG can model several applications, such as
preventing competitors from effective canvassing, also known as door knocking, which is considered crucial for political campaigns \citep{lupfer1972merits,bhatti2019door,nyman2017door}. In such a setting, the leader who wants to minimize the support to the competitor via canvassing and who has limited resources should identify and convince key groups of individuals. 
It can also be used to identify critical locations for patrolling. While studies including \citep{keskin2012analysis,cruz2021mathematical} focus on the maximization of patrol coverage, in the interdiction version the leader's problem could model an adversary who wants to damage the benefit/coverage of patrolling. Solving this problem reveals important locations whose interdiction undermines the security operations in the worst possible way. 
Lastly, the leader can model security forces who want to prevent criminal activities of a moving follower. An example is to find the best spots in a touristic district to continuously monitor so that the pickpockets who stroll around are restrained.
To the best of authors' knowledge, there is limited work considering interdiction within a routing problem. In Section \ref{sec:lit} we provide a literature review on existing problems and studies in this area.

\subsection{Contribution and outline}

The \OIG is a two-player Stackelberg game \citep{von1952theory} and thus it can be modeled as a bilevel optimization problem (BOP). 
We first formulate the \OIG as a BOP and then propose a single-level reformulation based on so-called \emph{interdiction cuts} to tackle this challenging problem. This technique was introduced in \cite{fischetti2019interdiction} for interdiction games fulfilling a certain \emph{monotonicity} assumption and we show that cuts of this form can also be used for the \OIG. Based on this reformulation, we develop a branch-and-cut algorithm to solve the \OIG and introduce various enhancements for the algorithm. For solving the lower level problem within this algorithm, we make use of the branch-and-cut ideas proposed by \citet{fischetti1998solving} for the \OP. The main contributions of our work can be summarized as follows:

\begin{itemize}
    \item We introduce the \OIG, which is a competitive version of the well known \OP and can model various applications ranging from security games to campaign planning.
    \item We formulate the \OIG as a zero-sum BOP with binary decision variables in both levels. Then, we propose a single-level reformulation of the problem using \emph{interdiction cuts}. To the best of our knowledge, this method has not been used for any other routing problem yet.
    \item Based on this formulation, we propose an algorithmic framework to solve the \OIG exactly via branch-and-cut. This framework includes several components such as cut pools and integrated heuristic procedures which could reduce the computational burden. 
    \item We develop a genetic algorithm to heuristically solve the \OIG.
    \item We provide a computational study on \OP instances from literature adapted to the \OIG to assess the efficacy of our solution algorithms and their ingredients.
\end{itemize}

The paper is organized as follows. In Section \ref{sec:model} we introduce the BOP formulation of the \OIG and propose a single-level reformulation. We then describe a branch-and-cut method together with several enhancement strategies. In Section \ref{sec:GA}, we present our genetic algorithm. We evaluate the performance of our solution approaches in Section \ref{sec:results}. Finally, we draw conclusions and provide possible future research directions in Section \ref{sec:conclu}.

\subsection{Literature review \label {sec:lit}}

In this section, we provide an overview of the literature on interdiction games involving routing decisions and on routing problems in a bilevel optimization setting. 
First, we focus our attention on studies that consider routing interdiction. In one of them, \citet{lozano2017solving} address a fortification-interdiction variant of the traveling salesperson problem where the arcs are subject to protection and interdiction. Unprotected arcs can be interdicted by the attacker, which increases the cost of those arcs. They propose an exact iterative algorithm based on sampling of feasible tours. In \citep{kheirkhah2016improved} the vehicle routing problem with complete arc interdictions is considered, and a Benders decomposition algorithm is proposed which is capable to solve small size problems. In \citep{kheirkhah2016bi}, a hazmat routing interdiction problem is presented in which the leader aims to minimize the risk and the follower minimizes the routing cost. They propose metaheuristic methods to solve it. Several variants of arc interdiction vehicle routing problem are addressed by \cite{bidgoli2018arc,sadati2020r,sadati2020trilevel} and \cite{nadizadeh2021bi}, and are handled via heuristic/metaheuristic methods. To the best of our knowledge, neither node interdictions nor a follower solving an orienteering problem were considered before.

Next, we review works addressing a bilevel optimization problem (BOP) involving routing in its lower level, i.e., a more general setting. 
In BOPs each player has his/her own objectives and constraints and the leader, who acts first, anticipates the optimal follower response (see, e.g., \citet{dempe2020bilevel, kleinert2021survey} for an overview on BOPs). 
An \IG is a special class of BOPs where the leader and the follower optimize the same objective function in opposite directions, while the leader affects the follower problem via interdictions of his/her assets. 
Regarding exact approaches for BOPs with a routing component, \citet{cerulli2023bilevel} introduce the bilevel profitable tour problem where the leader is the logistics platform assigning orders to carriers and the followers are the carriers solving a profitable tour problem. They develop a branch-and-cut algorithm to solve the problem exactly.
Aside from this work which uses an exact algorithm, there are also several works on BOPs with a routing component which propose metaheuristic algorithms to solve the adressed problem:
The paper \citep{nikolakopoulos2015metaheuristic} addresses vehicle routing problem with backhauls and time windows in a military context. The problem is formulated as a BOP where the goal of the leader is to minimize the number of vehicles, and the follower wants to minimize the total routing cost. 
\citet{ning2017multilevel} consider a vehicle routing problem with uncertain travel times. In the proposed bilevel model, the leader minimizes the expected total waiting time, and each follower (vehicle) minimizes its own waiting time.  
In the production-distribution planning problem considered by \citet{calvete2011bilevel}, the distribution and manufacturing companies act respectively as the leader and the follower of a bilevel model. Each player seeks to minimize their costs.
\citet{camacho2022tabu} study a similar problem in which the distributor has $CO_2$ emission goals in addition to profit maximization. This leads to a bi-objective leader problem. 
\citet{parvasi2019bi} consider the problem of bus stop location and school bus routing, which is modeled as a BOP. 

Aside from these works, which model competitive settings with multiple agents, sometimes routing problems just involving a single level (i.e., a single decision maker) are also modeled as BOPs:
\citet{marinakis2007new} formulate the vehicle routing problem as a BOP such that in the first level customers are assigned to vehicles and in the second level optimal routes of these assignments are determined. 
Similarly, \citet{marinakis2008bilevel} model the location routing problem with a BOP formulation whose leader makes the strategic decisions  (facility locations) and whose follower makes operational decisions (routes). Both problems are solved via genetic algorithms. 
\citet{jia2021bilevel} formulates the capacitated electric vehicle routing problem as a BOP whose upper level involves the routing decisions and the lower level involves determining the charging schedule. They propose an ant colony optimization algorithm. 

Note that there are generic methods for solving interdiction games under certain assumptions, such as the iterative bounding algorithms in \citet{tang2016class, lozano2017backward, taninmics2022improved}, or the branch-and-cut method in \citet{fischetti2019interdiction}. Similarly, there exist methods for solving integer bilevel linear programming problems \citep{wang2017watermelon, fischetti2017new,tahernejad2020branch}. However, due to the inherent difficulty of interdiction games and integer bilevel linear programming problems, these generic methods usually are only capable of solving rather small-sized (generic) instances. 
For this reason, we design an exact algorithm tailored to the \OIG (based on ideas from \citet{fischetti2019interdiction}) and also propose a genetic algorithm to heuristically obtain high-quality solutions within a shorter running time compared to our exact approach. 

\section{An interdiction-cut-based exact solution method for the \OIG}
\label{sec:model}

In order to formulate the \OIG as a bilevel integer program, we introduce the following notation. Let $\mathbf{z}\in\{0,1\}^{|V|}$ be a vector of decision variables,
so that $z_i = 1$ if the leader interdicts node $i\in V$,
and $z_i = 0$ otherwise;
hence, $z_{i}=1$ implies that the leader prevents the follower from getting the prize of node $i$.
Likewise, let $\mathbf{y}\in\{0,1\}^{|V|}$ be a vector of decision variables so that
$y_i = 1$ if the follower visits node $i\in V$ in his/her tour, and $y_i = 0$ otherwise;
additionally, let $\mathbf{x}\in\{0,1\}^{|E|}$ be a vector of decision variables so that
$x_{e} = 1$ if the follower traverses the edge $e$ in the tour, and $x_{e} = 0$ otherwise. 
Let $\tau(\mathbf{x},\mathbf{y})$ denote the tour induced by a given pair $(\mathbf{x},\mathbf{y})$;
and let $\mathcal{T}$ be the set of 
all feasible tours,
i.e., cycles that do not contain subtours and that pass through the depot $\rho_f$, 
where each node in the tour is visited at most once. 
Considering this notation, the \OIG can be formulated by the following (bilevel) MIP model:
\begin{flalign}
\mbox{\mytag{\OIGB}} \qquad\qquad  {\phi}^{\ast} = \min_{\mathbf{z}\in \{0,1\}^{|V|}} \;\; \max_{(\mathbf{x},\mathbf{y})\in \{0,1\}^{|E|\times |V|}} \;\; & \sum_{i \in V} p_i(1-z_i)y_i \qquad\qquad\qquad\qquad \label{eq:iop1}\tag{OBJ}\\
    \mbox{s.t.}\qquad 
    & \sum_{i\in V} z_{i} \leq Q_\ell \label{eq:iop2}\tag{IBUDGET} \\
    & \sum_{e\in E} d_{e} x_{e} \leq B_f \label{eq:iop3}\tag{DBUDGET}\\
    &\tau(\mathbf{x},\mathbf{y}) \in \mathcal{T}.  \label{eq:iop4}\tag{TOUR}
\end{flalign}
The objective function~\eqref{eq:iop1} encodes the (bilevel) $\min\max$ optimization 
goal, which is the minimization of maximum prize collected from the nodes that are visited and non-interdicted.
Constraint~\eqref{eq:iop2} imposes that the total
number of interdictions does not exceed the interdiction budget $Q_\ell$.
Likewise, the constraint~\eqref{eq:iop3} imposes that the total distance
of the follower's tour does not exceed the follower's distance budget $B_f$.
Finally, the constraint~\eqref{eq:iop4} ensures that the
vectors $\mathbf{x}$ and $\mathbf{y}$ must induce a feasible tour that includes the
depot $\rho_f$.

\subsection{A single-level reformulation of the \OIG}

Before we present a single-level reformulation of the \OIG, we first formulate constraint~\eqref{eq:iop4}
using subtour elimination constraints.
For a given subset of nodes $S\subseteq V$, let $\delta(S) = \left\{\{i,j\}:e\in E\mid
i\in S,\; j\in V\setminus S\right\}$;
i.e., $\delta(S)$ corresponds to the set of edges that are incident to the nodes contained in set $S$.
Using this definition, $\tau(\mathbf{x},\mathbf{y}) \in \mathcal{T}$
can be encoded by the following set of constraints:
\begin{align}
    &\sum_{e\in \delta(j)}x_e=2y_j,\; \forall j\in V \label{eq:degree} \tag{SEC.1}\\
    &\sum_{e\in \delta(S)}x_{e}\geq 2 y_j,\; \forall j\in V\setminus S,\; \forall S\subset V\mid_{\rho_f \in S} \label{eq:GSEC} \tag{SEC.2}\\
    &y_{\rho_f}=1. \label{eq:depot} \tag{SEC.3} 
\end{align}
Constraints~\eqref{eq:degree} model the fact that if a node $j'\in V$ 
is included in the follower's tour (i.e., $y_{j'}=1$), 
then exactly two of its adjacent edges must also be included in the tour 
($\sum_{e\in \delta(j')}x_e = 2$).
Constraints~\eqref{eq:GSEC} are the so-called \emph{generalized subtour elimination constraints} (GSECs) and they ensure that the tour defined
by $\mathbf{x}$ and $\mathbf{y}$ does not contain subtours.
Constraint~\eqref{eq:depot} ensures that the depot $\rho_f$
is included in the follower's tour.

Let $\Phi(\mathbf{z})$ be the value function of the lower level problem for a given interdiction decision $\mathbf{z}$, i.e., 
\begin{align*}
    \Phi(\mathbf{z}) = \max_{(\mathbf{x},\mathbf{y})\in \{0,1\}^{|E|\times |V|}} \;\; & \sum_{i \in V} p_i(1-z_i)y_i  \\
    \mbox{s.t.}
    &\mbox{ \eqref{eq:iop3}, \eqref{eq:degree}, \eqref{eq:GSEC}, and \eqref{eq:depot}}.
\end{align*}
Then, we can write the value function reformulation of the \OIG as follows:
\begin{align*}
    {\phi}^{\ast} = \min_{\mathbf{z}\in \{0,1\}^{|V|}}\;\; & t \\
    \mbox{s.t.}\qquad &t \geq \Phi(\mathbf{z})\\
    & \eqref{eq:iop2}.
\end{align*}
Note that the value function reformulation is non-convex, even when the binary restrictions are relaxed, due to the value function constraint $t\geq \Phi(\mathbf{z})$ which ensures that the objective function value of the upper level is at least as large as the optimal objective function value of the lower level for the selected interdiction decision. However, it can be further reformulated by considering the feasible follower solutions as follows. 
Let $\mathbf{Y}$ be the set of \textit{all} vectors $\mathbf{\hat y}$
such that there exists a tour $\tau(\mathbf{\hat x},\mathbf{\hat y}) \in \mathcal{T}$ for some $\mathbf{\hat x}\in \{0,1\}^{|E|}$,
satisfying the follower's distance budget $B_f$. 
Using this notation, we can obtain the following single-level reformulation of the \OIG.
\begin{align}
   \mbox{\mytag{OIGS}} \qquad\qquad  {\phi}^{\ast} = \min_{\mathbf{z}\in \{0,1\}^{|V|}}\; &t \notag\\
    \mbox{s.t.}\qquad & t \geq \sum_{i \in V} p_i(1-z_i)\hat y_i,\; \forall \mathbf{\hat y} \in \mathbf{Y} \label{eq:interdictionCut} \tag{ICUT}\\
    & \eqref{eq:iop2} \notag
\end{align}
In this formulation,~\eqref{eq:interdictionCut} corresponds to the set of \emph{interdiction cuts} (following the terminology of \citet{fischetti2019interdiction}). These cuts model the value function constraint $t\geq \Phi(\mathbf{z})$.

\begin{proposition}
The formulation \myref{OIGS} models the \OIG.
\end{proposition}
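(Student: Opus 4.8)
The plan is to establish that \myref{OIGS} and the bilevel model \myref{OIGB} (equivalently, its value-function reformulation) define the same optimization problem, by showing they share the same feasible leader region and attain the same objective value. Both problems minimize over $\mathbf{z}\in\{0,1\}^{|V|}$ subject to the single leader constraint \eqref{eq:iop2}, so the set of admissible interdiction decisions is literally identical in the two formulations. Consequently, the entire argument reduces to verifying that, for every fixed feasible $\mathbf{z}$, the smallest value of $t$ permitted by \myref{OIGS} coincides with the lower-level value $\Phi(\mathbf{z})$.

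I would carry this out in two steps. First, I fix $\mathbf{z}$ and read off the minimal admissible $t$ from the interdiction cuts \eqref{eq:interdictionCut}. Since these cuts require $t\geq \sum_{i\in V}p_i(1-z_i)\hat y_i$ to hold simultaneously for \emph{every} $\hat y\in\mathbf{Y}$, and since $\mathbf{Y}\subseteq\{0,1\}^{|V|}$ is finite, the least feasible $t$ equals $\max_{\hat y\in\mathbf{Y}}\sum_{i\in V}p_i(1-z_i)\hat y_i$; this is just the elementary fact that a scalar bounded below by a finite family of numbers is minimized at the largest of them. Second, I argue this maximum equals $\Phi(\mathbf{z})$. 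The crucial observation is that the follower's objective $\sum_{i\in V}p_i(1-z_i)y_i$ depends only on the visit vector $\mathbf{y}$ and not on the routing vector $\mathbf{x}$, while by construction $\mathbf{Y}$ is exactly the projection onto the $\mathbf{y}$-coordinates of the lower-level feasible region, i.e.\ the set of $\hat y$ admitting some routing $\hat x$ with $\tau(\hat x,\hat y)\in\mathcal{T}$ satisfying \eqref{eq:iop3} and \eqref{eq:degree}--\eqref{eq:depot}. Optimizing a $\mathbf{y}$-only objective over this projection therefore yields the same optimum as optimizing over the full $(\mathbf{x},\mathbf{y})$ feasible set, giving $\max_{\hat y\in\mathbf{Y}}\sum_{i\in V}p_i(1-z_i)\hat y_i=\Phi(\mathbf{z})$.

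Combining the two steps, for each fixed feasible $\mathbf{z}$ the optimal $t$ in \myref{OIGS} equals $\Phi(\mathbf{z})$; minimizing over the common leader region then yields $\min_{\mathbf{z}}\Phi(\mathbf{z})=\phi^{\ast}$, which is precisely the value delivered by the value-function reformulation and hence by \myref{OIGB}.

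The step requiring the most care — rather than a genuine obstacle — is the projection equivalence in the second step: I must make explicit that $\hat y\in\mathbf{Y}$ holds \emph{if and only if} there exists a completing routing $\hat x$ forming a feasible tour within the distance budget, so that no feasible follower solution is dropped and no spurious visit vector is introduced. Once this correspondence is stated cleanly, equality of the optima is immediate, since the objective is oblivious to which feasible completion $\hat x$ is chosen. (I would also implicitly use that $\mathbf{Y}\neq\emptyset$, e.g.\ that the depot $\rho_f$ admits at least one feasible tour, so that $t$ is bounded below and the inner maximum is well defined.)
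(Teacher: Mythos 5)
Your proof is correct and follows essentially the same route as the paper's: both arguments reduce to showing that, for each fixed feasible $\mathbf{z}$, the binding interdiction cut gives exactly $\Phi(\mathbf{z})$, using that $\mathbf{Y}$ is the projection of the follower's feasible set (which is unaffected by the interdiction decision) and that the objective depends only on $\mathbf{y}$. The paper merely packages the two inequalities as a two-case proof by contradiction, whereas you state the same equality directly via the projection argument.
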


\begin{proof}
We have to show that for any interdiction decision $\mathbf{z}$ the set of constraints~\eqref{eq:interdictionCut} ensures that $t$ will have the value of $\Phi(\mathbf{z})$ (taking into account that the objective function forces $t=\max_{\mathbf{\hat y} \in \mathbf{Y}} \sum_{i \in V} p_i(1- z_i)\hat y_i$). Suppose this is not the case. This means there exist an interdiction decision $\mathbf{\bar  z}$ for which either $\Phi(\mathbf{\bar  z}) > \sum_{i \in V} p_i(1-\bar z_i)\hat y_i,\; \forall \mathbf{\hat y} \in \mathbf{Y}$ or $\exists \mathbf{\hat y} \in \mathbf{Y}: \sum_{i \in V} p_i(1-\bar  z_i)\hat y_i> \Phi(\mathbf{\bar  z})$. We proceed by case distinction.
\begin{itemize}
\item Assume $\Phi(\mathbf{\bar  z}) > \sum_{i \in V} p_i(1-\bar z_i)\hat y_i,\; \forall \mathbf{\hat y} \in \mathbf{Y}$: Let $\mathbf y^*(\mathbf{\bar  z})$ be an optimal solution of $\Phi(\mathbf{\bar  z})$. Clearly $\mathbf y^*(\mathbf{\bar  z}) \in \mathbf Y$ and $\sum_{i \in V} p_i(1-\bar  z_i) y^*(\mathbf{\bar z})_i=\Phi(\mathbf{\bar z})$. Thus we arrive at a contradiction to our assumption.
\item Assume $\exists \mathbf{\hat y} \in \mathbf{Y}: \sum_{i \in V} p_i(1-\bar  z_i)\hat y_i> \Phi(\mathbf{\bar  z})$: Since the interdiction decisions to not affect the feasibility of the lower level problem, we have that $\mathbf{\hat y}$ is feasible for the lower level problem given interdiction decision $\mathbf{\bar  z}$. Consequently the value of $\Phi(\mathbf{\bar  z})$ must be at least $\sum_{i \in V} p_i(1-\bar  z_i)\hat y_i$. Thus we we arrive at a contradiction to our assumption.
\end{itemize}
We have arrived at a contradiction for both cases, which concludes our proof.
\end{proof}

\subsection{A branch-and-cut algorithm}
\label{sec:algo}

In order to solve the \OIG, we propose a branch-and-cut (B\&C) algorithm based on \myref{OIGS}, where we drop \eqref{eq:interdictionCut} and add those
constraints \emph{on-the-fly}. 
In the remainder, we denote the linear programming (LP) relaxation of any B\&C subproblem by $\overline{\mbox{\OIGS}}$, 
which includes a subset of \eqref{eq:interdictionCut}, in addition to \eqref{eq:iop2}, and branching decisions made to reach the current B\&C node. We note that in order to separate \eqref{eq:interdictionCut} we employ another B\&C algorithm within our main B\&C algorithm, details are given below.

\subsubsection{Separation of interdiction cuts}

In this section, we describe how we separate the inequalities \eqref{eq:interdictionCut} while implementing the B\&C algorithm to solve the \OIG.
Let $(\bar{t},\bar{\mathbf{z}})$ be a feasible 
solution to $\overline{\mbox{\OIGS}}$. 
We need to solve the following separation problem, which is identical to the follower's problem for $\mathbf{z}=\bar{\mathbf{z}}$:
\begin{align*}
  \mbox{\mytag{SEP}} \qquad\qquad  \Phi(\bar{\mathbf{z}}) = \max_{(\mathbf{x},\mathbf{y})\in \{0,1\}^{|E|\times |V|}} & \; \sum_{i \in V} p_i(1-\bar{z}_i)y_i \\
    \mbox{s.t.}\qquad 
        & \mbox{\eqref{eq:iop3},~\eqref{eq:degree},~\eqref{eq:GSEC}~and~\eqref{eq:depot}}.\notag  
\end{align*}
If $\bar t < \Phi(\bar{\mathbf{z}})$, then we need to add a violated interdiction cut to separate the point $(\bar{t},\bar{\mathbf{z}})$. Otherwise, $\bar{t}$ captures the optimal follower objective value correctly, and we treat the current point as a feasible solution to our problem. 

We enhance the formulation \myref{SEP} by making use of two classes of valid inequalities for the orienteering problem \citep{fischetti1998solving}. 
The first class of inequalities, that we refer to as \emph{logical constraints}, 
is given by\begin{align}
    x_e\leq y_j, \qquad \forall e \in \delta(j),\;\forall j \in V. \label{eq:logical} \tag{Logical}
\end{align}
Basically, these constraints ensure that if a given node $j$ is not visited by
the follower's tour ($y_j = 0$), then none of the incident edges is part of the tour ($x_e = 0,\;\forall e \in \delta(j)$).
The second class of inequalities corresponds to the \emph{cycle cover inequalities},
which are given by\begin{align}
    \sum_{e\in E_\tau}x_{e} \leq \sum_{j\in V_\tau} y_j -1, \label{eq:CC} \tag{CC}
\end{align}
for a given tour $\tau$ that is encompassed by the nodes in $V_\tau$ and by the edges in $E_\tau$, 
and with a total distance $\sum_{e\in E_\tau}d_e$ larger than $B_f$.
These constraints ensure that if the follower's tour includes the nodes in $V_\tau$,
then at least one of the edges in $E_\tau$ must be excluded from the tour
in order to not violate the corresponding follower's distance budget constraint~\eqref{eq:iop3}.

\subsubsection{Separation of follower cuts}

To solve the separation problem (SEP), we carry out another B\&C procedure where we drop the subtour elimination constraints \eqref{eq:GSEC} from the initial formulation and add violated ones once they are detected, on-the-fly. In addition, we make use of the valid inequalities described in the previous section. Suppose that we are given the (possibly infeasible) follower solution $(\mathbf{\bar x}, \mathbf{\bar y})$. In the following, we describe how we separate the inequalities of each type at $(\mathbf{\bar x}, \mathbf{\bar y})$.

\paragraph{Generalized subtour elimination constraints.}
These inequalities are obtained using the maximum flow-based approach proposed by \cite{fischetti1998solving}. In this approach, given $(\mathbf{\bar x}, \mathbf{\bar y})$, first the so-called support graph is first obtained which contains the nodes and edges of the original graph such that $\bar x_e>0$ and $\bar y_j>0$, respectively. On this graph, a minimum capacity cut separating the depot and any non-depot node leads to a (possibly violated) GSEC. The nodes are considered one by one, according to the non-decreasing order of $\bar y_j$ and then the maximum flow computations are done. We refer the reader to \cite{fischetti1998solving} for further details.

\paragraph{Logical constraints.}
These inequalities are obtained by complete enumeration of node-edge pairs such that $\bar x_e >y_j $ where $e\in \delta(j)$, which can be done in $O(|E|)$ time.

\paragraph{Cycle cover inequalities.}
These inequalities are obtained via the heuristic procedure in \cite{fischetti1998solving}. This heuristic takes $\mathbf{\bar x}$ as input and computes a maximum-weight spanning tree. Then for each edge $e$ that is not in the tree, if adding $e$ creates a cycle that passes through $\rho_f$, then the violation of the cut for the resulting tour is checked.

At every (follower) B\&C node with a fractional solution $(\bar{\mathbf{x}},\bar{\mathbf{y}})$, we first look for all possible violated valid inequalities \eqref{eq:logical}. If none is found, then we try to find violated inequalities \eqref{eq:GSEC}. If violated cuts are not identified, then violated \eqref{eq:CC} are tried to be obtained. In case there are no violated inequalities of these three types, no action is needed.
For integer feasible $(\bar{\mathbf{x}},\bar{\mathbf{y}})$, we only look for violated inequalities of type \eqref{eq:GSEC}. 
If there is none, $(\bar{\mathbf{x}},\bar{\mathbf{y}})$ is a feasible solution to \myref{SEP} and it replaces the incumbent solution if it is a better one.

\subsection{Enhancement strategies}
\label{sec:enhancement}

While solving the \OIG with the B\&C approach that we propose, we make use of some strategies that could help to speed up the algorithm, in particular the separation procedure. Below we describe the algorithm components developed to this end. In our computational experiments, we try different combinations of these enhancement strategies. The details of these combinations including the values used for the parameters which occur in some of the strategies described in the following are given in Section \ref{sec:numericresults}.

\paragraph{Separation problem objective lower bound (lower cutoff).}
For a given leader solution $(\bar{t},\bar{\mathbf{z}})$ that is the optimal solution of the current B\&C subproblem, the optimal objective value $\Phi(\bar{\mathbf{z}})$ of the separation problem (SEP) cannot be less than $\bar{t}$ since the interdiction cuts underestimate the follower objective value. Therefore, we can set a lower cutoff value $\bar{t}$ on the objective function value while solving (SEP), for more efficient pruning of nodes in the B\&C tree of (SEP).

\paragraph{Cut pool.} Every time (SEP) is solved, inequalities of type \eqref{eq:CC} and \eqref{eq:GSEC} are generated. We keep a pool of previously obtained inequalities of these types to be used in later attempts to solve the separation problem. Based on preliminary experiments, instead of including all the inequalities of the cut pool in the initial formulation of (SEP), we iterate over the pool and add the violated ones at the root node of the B\&C tree, after each time the node subproblem is solved. 

\paragraph{Solution pool.} Every time (SEP) is solved for a given leader solution $\bar{\mathbf{z}}$,
which yields a feasible solution $(\bar{\mathbf{x}},\bar{\mathbf{y}})$,
we retrieve the obtained tour $c = \tau(\bar{\mathbf{x}},\bar{\mathbf{y}})$
and add it to a (solution) pool denoted by $\mathcal{C}$. These solutions are utilized to generate new feasible follower solutions that possibly yield violated interdiction cuts, as described in the following paragraph. 
We denote by $V_c=\{j\in V: \bar y_j=1\}$ and $E_c=\{e\in E: \bar x_e=1\}$ the sets of nodes and edges, respectively, associated to tour $c$.

\paragraph{Heuristic follower solutions.}
For the correctness of the overall B\&C algorithm, it is necessary to have an exact separation method for integer leader solutions, i.e., a method that returns a violated interdiction cut when in the current solution $(\bar{t},\bar{\mathbf{z}})$, $\bar{t} $ strictly underestimates the follower objective value for $\bar{\mathbf{z}}$. On the other hand, it may be possible to separate fractional solutions as well which could improve the dual bound faster. Since it is usually costly to carry out an exact method for every fractional solution encountered during B\&C, the common approach is to use a heuristic separation algorithm that can yield violated cuts \citep{fischetti2019interdiction}. To this end, we propose a heuristic separation scheme that can be used for fractional and integer solutions, where we iterate over the solution pool $\mathcal{C}$. As described in Algorithm \ref{alg:FH}, for each tour $c$ in the pool, we first repair it by removing the nodes whose prizes cannot be collected under the current interdiction strategy $\bar{\mathbf{z}}$, unless this makes the path longer. 
Then, we apply \texttt{2-Opt} and \texttt{Insert} operations until the objective value cannot be improved, or the prize threshold is exceeded. The latter means that we are able to find a follower solution yielding a violated cut.

\begin{algorithm}[htbp]
\caption{\texttt{FindHeuristicFolSoln}}\label{alg:FH}
\SetKwInOut{Input}{Input}\SetKwInOut{Output}{Output}
\Input{Solution $\bar{\mathbf{z}}$ to ($\overline{\mbox{OIP}}$) (associating objective function  $\bar{t}$), a solution pool $\mathcal{C}$}
\Output{Possibly a feasible follower solution $(\bar{\mathbf{x}},\bar{\mathbf{y}})$ with a total prize larger than $\bar{t}$}
\For{each $c$ in $\mathcal{C}$}
{
Define a new set $c^\prime \leftarrow c$\;
\For{each node $i\in V_{c^\prime}$} 
    {
        \If{$\bar{z}_i=1$ and removing $i$ from $V_{c^\prime}$ does not increase the length of tour $c^\prime$}
        {
            Remove $i$ from $V_{c^\prime}$\;
        }
    }
set $\pi^\prime = \sum_{i\in V_{c^\prime}}p_i$, and set $\pi^{\prime\prime} \leftarrow 0$\;
\While{$\pi^{\prime\prime}<\pi^\prime \leq \bar{t}$}
    {
        $\pi^{\prime\prime} \leftarrow \pi^{\prime}$\;
        $c^{\prime} \leftarrow$ \texttt{2OPT($c^\prime$)}\;
        $c^{\prime}, \pi^{\prime} \leftarrow$ \texttt{INSERT($c^{\prime}$)}\;
    }
\If {$\pi^\prime > \bar{t}$}
    {
        Return the follower's solution $(\bar{\mathbf{x}},\bar{\mathbf{y}})$ associated with $c^\prime$\;
    }
}
\end{algorithm}

\paragraph{Follower preprocessing.} 
Given a leader solution $\bar{\mathbf{z}}$, let $i^\prime$ be a node selected by the leader,
i.e., $\bar{z}_{i^\prime} = 1$.
If there exists no pair $a,b\in V$ such that $d_{ai^\prime}+d_{i^\prime b}< d_{ab}$, 
it is safe to assume that the follower would not visit $i^\prime$ in an optimal tour 
since its prize is not available to the follower anymore.
In this case, we can fix $y_{i^\prime} = 0$ in \myref{SEP}. 
Otherwise, one of the following conditions could hold in an optimal follower tour: 
\textbf{(i)} $i^\prime$ is not visited, \textbf{(ii)} $i^\prime$ is visited between a node pair $a$ and $b$ where $d_{a i^\prime}+d_{i^\prime b}< d_{ab}$, 
or \textbf{(iii)} $i^\prime$ is visited between a node pair $a$ and $b$ where $d_{a i^\prime}+d_{i^\prime b}\geq d_{ab}$, i.e., visited although it does not make the tour the shorter.
In the last case, removing $i^\prime$ would not compromise the optimality of the follower's tour 
since prize $p_{i^\prime}$ is not collected anyway. 
So, exactly one of the following conditions must hold: $(y_{i^\prime}=0) $, $ (x_{a_1 i^\prime}+x_{i^\prime b_1}=2), \ldots ,(x_{a_k i^\prime}+x_{i^\prime b_k}=2)$, where $(a_k,b_k)$ are all pairs with $d_{a_k i^\prime}+d_{i^\prime b_k}< d_{a_k b_k}$. This result can be used to strengthen the formulation (SEP) with additional constraints. The details are provided in Algorithm \ref{alg:FollowerPreprocessing}.

\begin{algorithm}[htbp]
\caption{\texttt{FollowerPreprocessing}}\label{alg:FollowerPreprocessing}
\SetKwInOut{Input}{Input}\SetKwInOut{Output}{Output}
\Input{Leader solution $\bar{\mathbf{z}}$} 
\Output{A set of additional constraints $\mathcal{CS}$ to be added to (SEP)}
\For{each $i\in V\mid \bar{z}_i=1$}{
\eIf{$d_{ai}+d_{ib}\geq d_{ab}$ for all $a,b \in V \setminus \{i\}$}{
set $\mathcal{CS} \leftarrow \mathcal{CS} \cup \{x_i\leq 0\}$\;
}
{
compute $\mathcal{W}=\{(a,b)\mid a,b \in V\setminus \{i\}, d_{a i}+d_{i b}< d_{a b}\}$\;
create binary variable $b_{i0}$, set $\mathcal{CS} \leftarrow \mathcal{CS} \cup \{y_i\leq 1- b_{i0}\}$\;
initialize $k=0$\;
\For{each $(a,b)\in \mathcal{W}$}{
$k\leftarrow k+1$\;
create binary variable $b_{ik}$, set $\mathcal{CS} \leftarrow \mathcal{CS} \cup \{x_{ai}+x_{ib}\geq 2 b_{ik})\}$\;
}
set $\mathcal{CS} \leftarrow \mathcal{CS} \cup \{\sum_{k^\prime=0}^k b_{ik^\prime}=1 \}$\;
}
}
\end{algorithm}

\section{A genetic algorithm for the \OIG}
\label{sec:GA}

In this section, we propose a genetic algorithm to solve the \OIG. In a genetic algorithm, it is often helpful if the fitness value of an individual $z^\prime$ of the population would be equal to the objective function value. For the \OIG this would mean that the fitness value should be $\Phi(z^\prime)$, which is the optimal objective value of the OP under the interdiction decision $z^\prime$. Hence, to evaluate the fitness of an individual, we would need to solve an NP-hard problem.
Thus, in the design of our algorithm, for better time efficiency, we opt for a faster and heuristic method to calculate the fitness values. Procedure \texttt{EstimateObjective} is very similar to Algorithm \ref{alg:FH} and uses a follower solution pool to generate a good follower solution for the current interdiction strategy. Unlike Algorithm \ref{alg:FH} it does not take a target objective as input, but instead iterates over all pool solutions and returns the best new solution obtained and its total prize. Note that the resulting pair of leader and follower solutions may not be bilevel feasible as the follower tour that \texttt{EstimateObjective} outputs is only a heuristic solution for the given interdiction strategy (i.e., leader solution), and to be feasible, it needs to be an optimal solution for the given leader solution. However, this is not a problem since we use the fitness value only to lead our search towards a better leader solution.
To ensure bilevel feasibility of the final solution, a post-processing step is carried out to obtain the optimal follower tour.

The pseudocode of the genetic algorithm is provided as Algorithm \ref{alg:GA}. It starts with the generation of an initial set of follower solutions $\mathcal{C}$ to be used within \texttt{EstimateObjective} in later steps. In this step, we use the heuristic algorithm in \citep{fischetti1998solving} to solve the OP, which takes the optimal solution of the LP relaxation of the OP at the current B\&C node, generates a feasible tour in the first stage, and improves it in the second stage. We refer the interested reader to \citep{fischetti1998solving} for further details. In our case, we implement this method $k_0$ times, i.e., we solve the LP relaxation of the follower problem for $k_0$ randomly generated feasible $\mathbf{z}$ values. For each of them, once we get a feasible tour and improve it as described by \cite{fischetti1998solving}, we remove, in turn, one of the tour nodes and reapply the improvement procedure. We add the resulting tour to $\mathcal{C}$ if it is not obtained in the previous iterations.

\begin{algorithm}[htbp]
\caption{\texttt{GeneticAlgorithm}}\label{alg:GA}
\SetKwInOut{Input}{Input}\SetKwInOut{Output}{Output}
\Input{An instance of the \OIG 
} 
\Output{A feasible solution to \OIG{} and an upper bound on ${\phi}^{\ast}$}

Initialize the follower solution pool $\mathcal{C}$ with a set of feasible tours\;
initialize the population $P=\emptyset$\;
\For {$m=1,\ldots,p_0$}
{
$\hat{z}\leftarrow \texttt{Greedy}()$\;
$P\leftarrow P\cup \hat{z}$\;
}
\For {$it=1,\ldots, n_{maxIter}$}
{
\If{$it \equiv 0 \ (\mathrm{mod}\ 10)$}
{
re-evaluate the fitness values of the $k$ fittest individuals by \texttt{EstimateObjective}\;
}
$parent_1\leftarrow \texttt{K-Tournament}(P)$, $parent_2\leftarrow \texttt{K-Tournament}(P)$\;
$z^\prime \leftarrow \texttt{Crossover}(parent_1,parent_2)$\;
$z^\prime \leftarrow \texttt{Mutation}(z^\prime)$\;
$z^\prime \leftarrow \texttt{Repair}(z^\prime)$\;
evaluate the fitness of $z^\prime$ using \texttt{EstimateObjective}\;
\eIf{$|P|=p_{max}$}
{$P\leftarrow (P \setminus \arg \max_{\hat{z}\in P} \texttt{EstimateObjective}(\hat{z},\mathcal{C}))\cup z^\prime$ (replace the individual with the worst fitness value with $z^\prime$)\;}{$P\leftarrow P \cup z^\prime$ } 
}
re-evaluate the fitness values of all individuals\;
choose the fittest individual $z^\ast = \arg \min_{\hat{z}\in P} \texttt{EstimateObjective}(\hat{z},\mathcal{C})$\;
solve the follower problem to optimality to obtain $\Phi(z^\ast)$\;
return $z^\ast$ and $\Phi(z^\ast)$\
\end{algorithm}

After initializing $\mathcal{C}$, we initialize the population with $p_0$ interdiction strategies obtained with a randomized greedy algorithm \texttt{Greedy}. It starts with the computation of initial marginal gains, i.e., an estimate of the decrease in the follower objective, due to interdicting each node in $V$ and storing them in a sorted list in decreasing order of gains. Then we iterate in a lazy fashion and pick the first node in the list. If its gain is not updated after the last interdiction decision, we recompute it with probability $1-p_s$ and re-sort the list. With the skipping probability $p_s$ we remove the node from the list. If the first node in the list has a newly computed gain value, we choose it to be the next node to interdict. We iterate until the interdiction budget $Q_\ell$ is reached or the list is empty. For the computation of marginal gains at any stage of \texttt{Greedy} we use \texttt{EstimateObjective}. Due to the skipping probability, at each call to \texttt{Greedy} we obtain a different interdiction strategy, i.e., an individual.

The selection of the parents is made according to a $K$-way tournament selection. For each of the two parents, $K$ individuals are randomly chosen and the fittest one is selected as a parent. A one-point crossover operator \texttt{Crossover}() is applied to the parents to generate a single offspring. Then, the mutation operator \texttt{Mutation}() applies zero, one, or two random bit flips with equal probabilities. If the offspring does not represent a feasible leader solution, we repair it by switching ones to zeros at the bits with smallest prize until the budget constraint is satisfied, which is denoted by \texttt{Repair}(). The fitness of the offspring is then computed via \texttt{EstimateObjective}(). We use a \emph{steady-state (incremental)} population structure, where an offspring replaces the individual with the worst fitness value immediately if the maximum population size $p_{max}$ is reached. 

The accuracy of \texttt{EstimateObjective} depends on the quality and diversity of the solutions in $\mathcal{C}$. Therefore, every time we call this procedure, we add the resulting tour to $\mathcal{C}$ considering an upper bound on the size of the pool. Since it is dynamically updated, the fitness value of an individual may change and get closer to the true value when it is re-calculated after some iterations. To better estimate the true objectives, we re-calculate the fitness values of the best $k$ individuals at every 10 iterations. Similarly, once the iterations are over, we re-calculate the fitness of each individual to determine the fittest one. Finally, to have a bilevel feasible solution, we solve the follower problem optimally for the selected leader solution corresponding to the fittest individual.

\section{Computational results and discussion}
\label{sec:results}

All the algorithms we propose are implemented in C++. Whenever we need to solve some MIPs or LPs they are solved via IBM ILOG CPLEX 12.10 at the default settings. We make use of CPLEX callbacks to implement our B\&C algorithm (including the B\&C algorithm to solve \myref{SEP}). During our experiments, we used the single core of an Intel Xeon E5-2670v2 machine with 2.5 GHz processor and 3GB of RAM. We set a time limit of one hour for all of our experiments.

\subsection{Description of the instances}
\label{subsec:instances}

Our data set consists of a subset of the symmetric traveling salesman problem instances available at TSPLIB (\url{http://comopt.ifi.uni-heidelberg.de/software/TSPLIB95/}). Among these 111 available instances, we select 38 for which the single-level OP can be solved optimally in less than 15 minutes in our environment, as our test instances.

For determining the node prizes, we consider two options: unit prizes, denoted by $u$; and pseudorandom prizes, denoted by $r$. For random prizes, we follow the approach in \cite{fischetti1998solving} and generate the values according to the equation $p_i=1+(7141i+73)\mod{100}$. Two interdiction budget levels $Q_l \in \{5,8\}$ are considered. The distance budget is determined as $B_f=0.5 \nu$ where $\nu$ denotes the optimal TSP tour length which is provided with the instances. The resulting instance set is available at \url{https://msinnl.github.io/pages/instancescodes.html}. 

\subsection{Results of the B\&C algorithm}
\label{sec:numericresults}

In our experiments with the B\&C algorithm, we consider the following algorithmic settings that are obtained by including a subset of the enhancement strategies we propose in Section \ref{sec:enhancement}: 

   \begin{itemize}
   \setlength\itemsep{0.2em}
        \item I: Only integer solutions are separated, in an exact way.
        \item IF: Both integer (exact) and fractional (heuristic) solutions are separated.
        \item IFH: Both integer (heuristic + exact) and fractional (heuristic) solutions are separated.
        \item IFHC: In addition to IFH we keep a (follower) cut pool.
        \item IFHCP: In addition to IFHC we apply \texttt{FollowerPreprocessing}.
	\end{itemize}

The lower cutoff strategy is applied in all settings by default. The follower solution pool $\mathcal{C}$ is used in all settings except I, where we do not generate heuristic follower solutions. In setting IFH, while separating integer solutions, we first try the heuristic method shown in Algorithm \ref{alg:FH}. If it does not yield a violated cut, then we solve the follower problem optimally to obtain a violated cut if there exists one (exact separation). Note that whenever we keep a (follower) cut pool, its size is bounded by 5000 cuts in total. In the experiments with fractional separation, at any (leader) B\&C node with a fractional node solution at most 10 passes of cut generation are allowed. 

In Table \ref{tab:BC_summary_unit} and Table \ref{tab:BC_summary_rand} we show some numerical results of our experiments, as averages over the instances with the same leader budget $Q_\ell$. In the columns we show the algorithmic setting, total running time in seconds ($t(s.)$), time to generate the interdiction cuts including the time to solve (SEP) ($t_{\text{SEP}}(s.)$), the optimality gap at the end of time limit (Gap(\%)), the optimality gap at the root node (rGap(\%)), the number of optimally solved instances out of all 38 (nOpt), the number of B\&C nodes generated (nBBnode), the number of interdiction cuts at integer solutions (intCuts), and the number of interdiction cuts at fractional solutions (fracCuts). The numbers are averages over 38 instances. 

\begin{table}[htbp]
  \centering
  \caption{Average results of the unit-prize instances}
    \begin{tabular}{clrrrrrrrr}
    \hline
    \multicolumn{1}{l}{$Q_\ell$} & Setting & \multicolumn{1}{l}{$t(s.)$} & \multicolumn{1}{l}{$t_{\text{SEP}}(s.)$} & \multicolumn{1}{l}{Gap(\%)} & \multicolumn{1}{l}{rGap(\%)} & \multicolumn{1}{l}{nOpt} & \multicolumn{1}{l}{nBBnode} & \multicolumn{1}{l}{intCuts} & \multicolumn{1}{l}{fracCuts} \\
    \hline
    \multirow{5}[2]{*}{5} & I     & 1746.8 & 1742.6 & 3.2   & 6.6   & 25    & 992.1 & 167.7 & 0.0 \\
          & IF    & 1378.2 & 1376.0 & 3.1   & 6.3   & 26    & 63.1  & 44.9  & 49.4 \\
          & IFH   & 747.0 & 741.3 & 0.2   & 4.6   & 33    & 484.1 & 453.6 & 150.1 \\
          & IFHC  & 277.9 & 273.8 & 0.0   & 4.4   & 38    & 647.0 & 382.8 & 180.9 \\
          & IFHCP & 270.1 & 261.2 & 0.0   & 4.3   & 38    & 546.0 & 473.0 & 155.1 \\
    \hline
    \multirow{5}[2]{*}{8} & I     & 2329.8 & 2317.4 & 2.1   & 7.5   & 16    & 5833.5 & 433.4 & 0.0 \\
          & IF    & 2005.8 & 1990.7 & 1.7   & 7.2   & 19    & 547.2 & 113.7 & 712.4 \\
          & IFH   & 1206.8 & 1024.5 & 0.6   & 9.0   & 29    & 5926.0 & 2034.3 & 2770.8 \\
          & IFHC  & 761.0 & 490.9 & 0.1   & 9.0   & 35    & 7279.9 & 2471.4 & 3306.1 \\
          & IFHCP & 766.1 & 536.2 & 0.2   & 9.1   & 34    & 5942.3 & 2548.4 & 3350.7 \\
    \hline
    \end{tabular}%
  \label{tab:BC_summary_unit}%
\end{table}%

\begin{table}[htbp]
  \centering
  \caption{Average results of the random-prize instances}
    \begin{tabular}{clrrrrrrrr}
    \hline
    \multicolumn{1}{l}{$Q_\ell$} & Setting & \multicolumn{1}{l}{$t(s.)$} & \multicolumn{1}{l}{$t_{\text{SEP}}(s.)$} & \multicolumn{1}{l}{Gap(\%)} & \multicolumn{1}{l}{rGap(\%)} & \multicolumn{1}{l}{nOpt} & \multicolumn{1}{l}{nBBnode} & \multicolumn{1}{l}{intCuts} & \multicolumn{1}{l}{fracCuts} \\
    \hline
    \multirow{5}[2]{*}{5} & I     & 2281.2 & 2277.9 & 1.1   & 5.3   & 15    & 824.0 & 112.7 & 0.0 \\
          & IF    & 1888.6 & 1885.4 & 0.7   & 4.4   & 23    & 349.4 & 39.3  & 224.3 \\
          & IFH   & 1756.4 & 1753.2 & 0.7   & 4.7   & 24    & 515.2 & 81.5  & 264.4 \\
          & IFHC  & 1259.9 & 1252.0 & 0.3   & 4.6   & 30    & 860.7 & 131.3 & 437.9 \\
          & IFHCP & 1142.9 & 1134.0 & 0.3   & 4.7   & 30    & 852.2 & 138.1 & 447.8 \\
    \hline
    \multirow{5}[2]{*}{8} & I     & 2545.9 & 2538.2 & 3.1   & 9.7   & 14    & 4477.5 & 188.2 & 0.0 \\
          & IF    & 2170.1 & 2155.3 & 2.3   & 8.1   & 18    & 1006.9 & 51.5  & 826.1 \\
          & IFH   & 2110.9 & 2095.4 & 2.3   & 8.5   & 17    & 1234.3 & 144.0 & 930.3 \\
          & IFHC  & 1900.0 & 1838.5 & 1.3   & 8.4   & 23    & 3639.3 & 224.4 & 2125.2 \\
          & IFHCP & 1773.0 & 1704.4 & 1.1   & 8.5   & 23    & 3735.7 & 245.0 & 2320.9 \\
    \hline
    \end{tabular}%
  \label{tab:BC_summary_rand}%
\end{table}%

The results indicate that separating fractional solutions is very effective in decreasing the solution time and tree size. For the unit-prize instances, separating integer solutions in a heuristic way also seems to significantly decrease the solution time, through a reduced number of times we need to solve \myref{SEP}. This is not the case for the random-prize instances, which we explain by the poor quality heuristic solutions. Solving \myref{SEP} usually cannot be avoided because of failing to find a violated cut via the heuristic, even though there exists one. Keeping a follower cut pool on the other hand, is effective under both prize choices. It reduces the overall solution time by reducing the time spent to solve \myref{SEP}. Lastly, \texttt{FollowePreprocessing} brings some improvement in terms of solution time of the random-prize instances, although its marginal contribution is not as large as of the other components. 

Figure \ref{fig:cumDist} shows the cumulative distribution of the running times of all instances under different algorithmic settings. While we are able to solve 44\% of the instances optimally in one hour under the basic setting I, this ratio is reached in two minutes under the setting IFHCP which is the best performer in terms of run time.

\begin{figure}[htbp]
\centering
\includegraphics[width=0.6\textwidth]{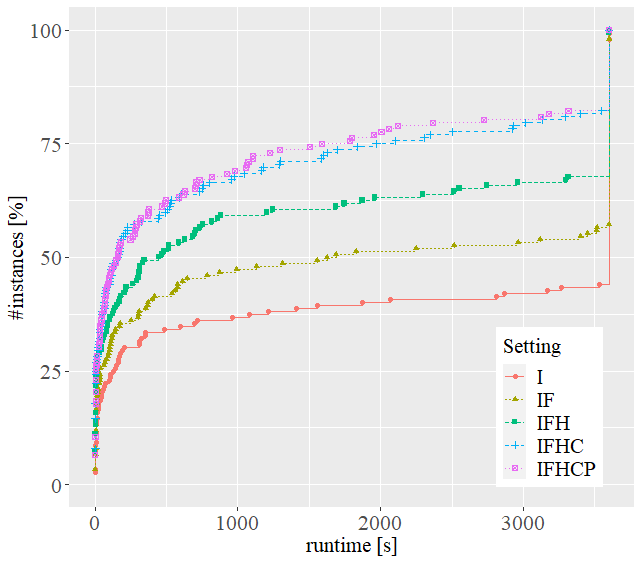}
\caption{Cumulative distribution of running times of all instances under different B\&C settings.}
\label{fig:cumDist}
\end{figure}

\subsection{Results of the genetic algorithm}

For our genetic algorithm, we consider the following parameter values, which we determined in preliminary tests:
Before the main iterations we initialize the follower solution pool $\mathcal{C}$ by applying the heuristic of \cite{fischetti1998solving} $k_0=10$ times. An initial population of $p_0=20$ individuals is created via \texttt{Greedy} using a skipping probability $p_s=0.4$. At every iteration 3-way tournament is applied to choose the parents. We allow a maximum population size of $p_{max}=100$, and limit the size of $\mathcal{C}$ by 2000 as the complexity of \texttt{EstimateObjective} increases with it. Every 10 iterations the objective value of the fittest individual is re-evaluated, which is likely to change its rank. The maximum number of iterations is $n_{maxIter}=5000$.

In Table \ref{tab:GA}, we compare the B\&C results with the results of the genetic algorithm given in Algorithm \ref{alg:GA}. The table displays the results of each instance, in terms of run time in seconds ($t(s.)$), the best objective value obtained via Algorithm \ref{alg:GA} ($z_{GA}$), the best objective value obtained via the B\&C method ($z_{BC}$), and their relative difference $\Delta = 100(z_{GA}-z_{BC})/z_{BC}$. Since not all $z_{BC}$ values are optimal objective values, $\Delta$ shows how much the heuristic solution objective value is far away from the best primal bound we have. The average solution time of the genetic algorithm is less than three minutes over all instances, whereas it finds solutions which are only 2.46\% away from the B\&C objective values, on the average. The maximum deviation from $z_{BC}$ is 10.42\% for unit-prize instances and 13.29\% for random-prize instances. These results indicate that our genetic algorithm performs well in case we need to obtain a high-quality feasible solution in short time.
%

\begin{table}[htbp]
  \centering 
  \caption{Genetic algorithm results compared to the B\&C results.}
  \scriptsize
    \begin{tabular}{ccrrrr|ccrrrr}
    \toprule
    \multicolumn{6}{c|}{Unit-prize}               & \multicolumn{6}{c}{Random-prize} \\
    \midrule
    Instance & Q     & \multicolumn{1}{l}{$t(s.)$} & \multicolumn{1}{l}{$z_{GA}$} & \multicolumn{1}{l}{$z_{BC}$} & \multicolumn{1}{c|}{$\Delta$} & \multicolumn{1}{l}{Instance} & Q     & \multicolumn{1}{l}{$t(s.)$} & \multicolumn{1}{l}{$z_{GA}$} & \multicolumn{1}{l}{$z_{BC}$} & \multicolumn{1}{c}{$\Delta$} \\
    \midrule
    \multicolumn{1}{l}{att48} & 5     & 12.84 & 26    & 26    & 0.00  & \multicolumn{1}{l}{att48} & 5     & 10.90 & 1219  & 1219  & 0.00 \\
    \multicolumn{1}{l}{att48} & 8     & 21.00 & 23    & 23    & 0.00  & \multicolumn{1}{l}{att48} & 8     & 14.59 & 1103  & 1038  & 6.26 \\
    \multicolumn{1}{l}{bayg29} & 5     & 5.23  & 12    & 12    & 0.00  & \multicolumn{1}{l}{bayg29} & 5     & 3.18  & 603   & 603   & 0.00 \\
    \multicolumn{1}{l}{bayg29} & 8     & 8.89  & 11    & 11    & 0.00  & \multicolumn{1}{l}{bayg29} & 8     & 5.60  & 476   & 474   & 0.42 \\
    \multicolumn{1}{l}{bays29} & 5     & 6.81  & 13    & 13    & 0.00  & \multicolumn{1}{l}{bays29} & 5     & 4.08  & 630   & 630   & 0.00 \\
    \multicolumn{1}{l}{bays29} & 8     & 4.63  & 11    & 11    & 0.00  & \multicolumn{1}{l}{bays29} & 8     & 6.04  & 463   & 463   & 0.00 \\
    \multicolumn{1}{l}{berlin52} & 5     & 20.61 & 31    & 31    & 0.00  & \multicolumn{1}{l}{berlin52} & 5     & 19.55 & 1583  & 1548  & 2.26 \\
    \multicolumn{1}{l}{berlin52} & 8     & 31.69 & 28    & 28    & 0.00  & \multicolumn{1}{l}{berlin52} & 8     & 24.46 & 1372  & 1318  & 4.10 \\
    \multicolumn{1}{l}{bier127} & 5     & 242.67 & 98    & 97    & 1.03  & \multicolumn{1}{l}{bier127} & 5     & 337.43 & 4892  & 4883  & 0.18 \\
    \multicolumn{1}{l}{bier127} & 8     & 250.25 & 95    & 95    & 0.00  & \multicolumn{1}{l}{bier127} & 8     & 483.29 & 4622  & 4621  & 0.02 \\
    \multicolumn{1}{l}{brazil58} & 5     & 32.22 & 40    & 40    & 0.00  & \multicolumn{1}{l}{brazil58} & 5     & 20.24 & 1909  & 1884  & 1.33 \\
    \multicolumn{1}{l}{brazil58} & 8     & 33.21 & 37    & 37    & 0.00  & \multicolumn{1}{l}{brazil58} & 8     & 30.61 & 1673  & 1664  & 0.54 \\
    \multicolumn{1}{l}{ch130} & 5     & 238.00 & 76    & 73    & 4.11  & \multicolumn{1}{l}{ch130} & 5     & 322.17 & 3969  & 3908  & 1.56 \\
    \multicolumn{1}{l}{ch130} & 8     & 294.88 & 74    & 70    & 5.71  & \multicolumn{1}{l}{ch130} & 8     & 350.66 & 3758  & 3706  & 1.40 \\
    \multicolumn{1}{l}{ch150} & 5     & 311.83 & 83    & 81    & 2.47  & \multicolumn{1}{l}{ch150} & 5     & 677.18 & 4433  & 4399  & 0.77 \\
    \multicolumn{1}{l}{ch150} & 8     & 368.94 & 83    & 78    & 6.41  & \multicolumn{1}{l}{ch150} & 8     & 797.05 & 4270  & 4197  & 1.74 \\
    \multicolumn{1}{l}{dantzig42} & 5     & 9.08  & 21    & 21    & 0.00  & \multicolumn{1}{l}{dantzig42} & 5     & 6.37  & 1005  & 1005  & 0.00 \\
    \multicolumn{1}{l}{dantzig42} & 8     & 11.93 & 19    & 18    & 5.56  & \multicolumn{1}{l}{dantzig42} & 8     & 11.17 & 817   & 802   & 1.87 \\
    \multicolumn{1}{l}{eil101} & 5     & 107.61 & 60    & 59    & 1.69  & \multicolumn{1}{l}{eil101} & 5     & 95.97 & 3160  & 3151  & 0.29 \\
    \multicolumn{1}{l}{eil101} & 8     & 140.35 & 58    & 56    & 3.57  & \multicolumn{1}{l}{eil101} & 8     & 131.83 & 2992  & 2944  & 1.63 \\
    \multicolumn{1}{l}{eil51} & 5     & 13.51 & 25    & 24    & 4.17  & \multicolumn{1}{l}{eil51} & 5     & 10.54 & 1471  & 1379  & 6.67 \\
    \multicolumn{1}{l}{eil51} & 8     & 21.61 & 24    & 23    & 4.35  & \multicolumn{1}{l}{eil51} & 8     & 18.04 & 1260  & 1214  & 3.79 \\
    \multicolumn{1}{l}{eil76} & 5     & 35.49 & 42    & 41    & 2.44  & \multicolumn{1}{l}{eil76} & 5     & 24.86 & 2211  & 2144  & 3.13 \\
    \multicolumn{1}{l}{eil76} & 8     & 43.07 & 40    & 39    & 2.56  & \multicolumn{1}{l}{eil76} & 8     & 60.23 & 2034  & 1954  & 4.09 \\
    \multicolumn{1}{l}{fri26} & 5     & 3.43  & 9     & 9     & 0.00  & \multicolumn{1}{l}{fri26} & 5     & 2.32  & 455   & 410   & 10.98 \\
    \multicolumn{1}{l}{fri26} & 8     & 4.77  & 7     & 7     & 0.00  & \multicolumn{1}{l}{fri26} & 8     & 2.16  & 341   & 301   & 13.29 \\
    \multicolumn{1}{l}{gr120} & 5     & 161.26 & 73    & 69    & 5.80  & \multicolumn{1}{l}{gr120} & 5     & 304.69 & 3861  & 3829  & 0.84 \\
    \multicolumn{1}{l}{gr120} & 8     & 220.86 & 69    & 66    & 4.55  & \multicolumn{1}{l}{gr120} & 8     & 338.55 & 3608  & 3604  & 0.11 \\
    \multicolumn{1}{l}{gr17} & 5     & 1.55  & 6     & 6     & 0.00  & \multicolumn{1}{l}{gr17} & 5     & 0.92  & 194   & 194   & 0.00 \\
    \multicolumn{1}{l}{gr17} & 8     & 1.04  & 3     & 3     & 0.00  & \multicolumn{1}{l}{gr17} & 8     & 1.18  & 118   & 118   & 0.00 \\
    \multicolumn{1}{l}{gr21} & 5     & 1.96  & 7     & 7     & 0.00  & \multicolumn{1}{l}{gr21} & 5     & 1.72  & 303   & 303   & 0.00 \\
    \multicolumn{1}{l}{gr21} & 8     & 1.80  & 5     & 5     & 0.00  & \multicolumn{1}{l}{gr21} & 8     & 1.57  & 191   & 191   & 0.00 \\
    \multicolumn{1}{l}{gr24} & 5     & 2.58  & 8     & 8     & 0.00  & \multicolumn{1}{l}{gr24} & 5     & 2.23  & 469   & 430   & 9.07 \\
    \multicolumn{1}{l}{gr24} & 8     & 2.74  & 6     & 6     & 0.00  & \multicolumn{1}{l}{gr24} & 8     & 2.04  & 304   & 304   & 0.00 \\
    \multicolumn{1}{l}{gr48} & 5     & 16.35 & 25    & 25    & 0.00  & \multicolumn{1}{l}{gr48} & 5     & 10.17 & 1139  & 1130  & 0.80 \\
    \multicolumn{1}{l}{gr48} & 8     & 22.72 & 22    & 22    & 0.00  & \multicolumn{1}{l}{gr48} & 8     & 17.55 & 985   & 985   & 0.00 \\
    \multicolumn{1}{l}{hk48} & 5     & 14.99 & 24    & 24    & 0.00  & \multicolumn{1}{l}{hk48} & 5     & 12.71 & 1174  & 1167  & 0.60 \\
    \multicolumn{1}{l}{hk48} & 8     & 23.95 & 23    & 22    & 4.55  & \multicolumn{1}{l}{hk48} & 8     & 18.35 & 1110  & 1027  & 8.08 \\
    \multicolumn{1}{l}{kroA100} & 5     & 68.38 & 54    & 51    & 5.88  & \multicolumn{1}{l}{kroA100} & 5     & 112.90 & 2838  & 2745  & 3.39 \\
    \multicolumn{1}{l}{kroA100} & 8     & 108.76 & 52    & 48    & 8.33  & \multicolumn{1}{l}{kroA100} & 8     & 107.58 & 2645  & 2521  & 4.92 \\
    \multicolumn{1}{l}{kroA150} & 5     & 289.02 & 83    & 80    & 3.75  & \multicolumn{1}{l}{kroA150} & 5     & 593.48 & 4566  & 4473  & 2.08 \\
    \multicolumn{1}{l}{kroA150} & 8     & 779.43 & 80    & 78    & 2.56  & \multicolumn{1}{l}{kroA150} & 8     & 763.48 & 4344  & 4237  & 2.53 \\
    \multicolumn{1}{l}{kroB100} & 5     & 77.17 & 54    & 52    & 3.85  & \multicolumn{1}{l}{kroB100} & 5     & 201.57 & 2782  & 2725  & 2.09 \\
    \multicolumn{1}{l}{kroB100} & 8     & 126.44 & 52    & 49    & 6.12  & \multicolumn{1}{l}{kroB100} & 8     & 169.17 & 2634  & 2559  & 2.93 \\
    \multicolumn{1}{l}{kroB150} & 5     & 443.80 & 83    & 81    & 2.47  & \multicolumn{1}{l}{kroB150} & 5     & 536.86 & 4560  & 4524  & 0.80 \\
    \multicolumn{1}{l}{kroB150} & 8     & 586.91 & 80    & 78    & 2.56  & \multicolumn{1}{l}{kroB150} & 8     & 604.61 & 4388  & 4346  & 0.97 \\
    \multicolumn{1}{l}{kroC100} & 5     & 73.05 & 51    & 50    & 2.00  & \multicolumn{1}{l}{kroC100} & 5     & 118.48 & 2667  & 2575  & 3.57 \\
    \multicolumn{1}{l}{kroC100} & 8     & 112.67 & 50    & 47    & 6.38  & \multicolumn{1}{l}{kroC100} & 8     & 193.43 & 2453  & 2411  & 1.74 \\
    \multicolumn{1}{l}{kroD100} & 5     & 66.62 & 55    & 53    & 3.77  & \multicolumn{1}{l}{kroD100} & 5     & 102.16 & 2965  & 2786  & 6.42 \\
    \multicolumn{1}{l}{kroD100} & 8     & 82.87 & 51    & 50    & 2.00  & \multicolumn{1}{l}{kroD100} & 8     & 141.12 & 2810  & 2583  & 8.79 \\
    \multicolumn{1}{l}{kroE100} & 5     & 64.75 & 53    & 51    & 3.92  & \multicolumn{1}{l}{kroE100} & 5     & 65.01 & 2934  & 2859  & 2.62 \\
    \multicolumn{1}{l}{kroE100} & 8     & 97.38 & 50    & 49    & 2.04  & \multicolumn{1}{l}{kroE100} & 8     & 102.79 & 2795  & 2648  & 5.55 \\
    \multicolumn{1}{l}{lin105} & 5     & 111.21 & 61    & 60    & 1.67  & \multicolumn{1}{l}{lin105} & 5     & 72.78 & 3147  & 3135  & 0.38 \\
    \multicolumn{1}{l}{lin105} & 8     & 143.84 & 58    & 57    & 1.75  & \multicolumn{1}{l}{lin105} & 8     & 99.07 & 2924  & 2901  & 0.79 \\
    \multicolumn{1}{l}{pr107} & 5     & 51.18 & 53    & 48    & 10.42 & \multicolumn{1}{l}{pr107} & 5     & 179.06 & 2189  & 2189  & 0.00 \\
    \multicolumn{1}{l}{pr107} & 8     & 133.05 & 45    & 45    & 0.00  & \multicolumn{1}{l}{pr107} & 8     & 180.22 & 1926  & 1926  & 0.00 \\
    \multicolumn{1}{l}{pr124} & 5     & 137.90 & 69    & 69    & 0.00  & \multicolumn{1}{l}{pr124} & 5     & 156.18 & 3390  & 3364  & 0.77 \\
    \multicolumn{1}{l}{pr124} & 8     & 138.02 & 66    & 66    & 0.00  & \multicolumn{1}{l}{pr124} & 8     & 110.76 & 3227  & 3129  & 3.13 \\
    \multicolumn{1}{l}{pr136} & 5     & 197.26 & 67    & 66    & 1.52  & \multicolumn{1}{l}{pr136} & 5     & 297.52 & 3922  & 3830  & 2.40 \\
    \multicolumn{1}{l}{pr136} & 8     & 269.29 & 67    & 64    & 4.69  & \multicolumn{1}{l}{pr136} & 8     & 349.70 & 3766  & 3693  & 1.98 \\
    \multicolumn{1}{l}{pr144} & 5     & 339.57 & 72    & 71    & 1.41  & \multicolumn{1}{l}{pr144} & 5     & 146.42 & 3781  & 3514  & 7.60 \\
    \multicolumn{1}{l}{pr144} & 8     & 259.29 & 72    & 68    & 5.88  & \multicolumn{1}{l}{pr144} & 8     & 587.87 & 3499  & 3261  & 7.30 \\
    \multicolumn{1}{l}{pr152} & 5     & 200.72 & 75    & 71    & 5.63  & \multicolumn{1}{l}{pr152} & 5     & 328.82 & 3826  & 3763  & 1.67 \\
    \multicolumn{1}{l}{pr152} & 8     & 290.43 & 71    & 69    & 2.90  & \multicolumn{1}{l}{pr152} & 8     & 385.70 & 3619  & 3539  & 2.26 \\
    \multicolumn{1}{l}{pr76} & 5     & 46.95 & 44    & 43    & 2.33  & \multicolumn{1}{l}{pr76} & 5     & 47.73 & 2254  & 2247  & 0.31 \\
    \multicolumn{1}{l}{pr76} & 8     & 52.13 & 42    & 41    & 2.44  & \multicolumn{1}{l}{pr76} & 8     & 56.53 & 2059  & 2025  & 1.68 \\
    \multicolumn{1}{l}{rat99} & 5     & 103.53 & 50    & 47    & 6.38  & \multicolumn{1}{l}{rat99} & 5     & 85.97 & 2650  & 2596  & 2.08 \\
    \multicolumn{1}{l}{rat99} & 8     & 128.07 & 48    & 45    & 6.67  & \multicolumn{1}{l}{rat99} & 8     & 144.29 & 2553  & 2438  & 4.72 \\
    \multicolumn{1}{l}{rd100} & 5     & 75.95 & 55    & 55    & 0.00  & \multicolumn{1}{l}{rd100} & 5     & 108.51 & 3086  & 3007  & 2.63 \\
    \multicolumn{1}{l}{rd100} & 8     & 101.07 & 52    & 52    & 0.00  & \multicolumn{1}{l}{rd100} & 8     & 140.61 & 2938  & 2809  & 4.59 \\
    \multicolumn{1}{l}{st70} & 5     & 38.16 & 38    & 37    & 2.70  & \multicolumn{1}{l}{st70} & 5     & 36.05 & 1924  & 1855  & 3.72 \\
    \multicolumn{1}{l}{st70} & 8     & 48.81 & 36    & 34    & 5.88  & \multicolumn{1}{l}{st70} & 8     & 44.46 & 1730  & 1672  & 3.47 \\
    \multicolumn{1}{l}{swiss42} & 5     & 11.40 & 22    & 22    & 0.00  & \multicolumn{1}{l}{swiss42} & 5     & 6.97  & 1004  & 1004  & 0.00 \\
    \multicolumn{1}{l}{swiss42} & 8     & 15.97 & 20    & 19    & 5.26  & \multicolumn{1}{l}{swiss42} & 8     & 9.80  & 810   & 810   & 0.00 \\
    \multicolumn{1}{l}{u159} & 5     & 642.95 & 87    & 87    & 0.00  & \multicolumn{1}{l}{u159} & 5     & 761.40 & 4636  & 4551  & 1.87 \\
    \multicolumn{1}{l}{u159} & 8     & 434.90 & 88    & 84    & 4.76  & \multicolumn{1}{l}{u159} & 8     & 816.76 & 4379  & 4374  & 0.11 \\
    \midrule
    \multicolumn{2}{c}{Average} & 127.57 & 46.36 & 45.04 & 2.46  & \multicolumn{2}{c}{Average} & 173.00 & 2378.91 & 2325.86 & 2.47 \\
    \bottomrule
    \end{tabular}%
  \label{tab:GA}%
\end{table}%

\section{Conclusions and future work}
\label{sec:conclu}

In this work, we propose the orienteering interdiction game where two players (leader and follower) compete in a hierarchical manner: The follower tries to maximize the total profit collected by visiting nodes (i.e., the sum of the prizes of the visited nodes) and the leader wants to minimize this amount by interdicting nodes (if a node is interdicted, the follower does not gain the prize of the node when visiting it). 
Such a setting may be encountered during political campaign planning to damage the effectiveness of the competitor's canvassing, 
in security operations where the routing-based activities of an adversary agent is prevented, or in the analysis of worst-case scenarios of the attacks towards patrolling security forces.  
This zero-sum Stackelberg game can be modeled as a bilevel optimization problem and further reformulated as a single-level problem adapting the so-called \emph{interdiction cuts} which were introduced in \citet{fischetti2019interdiction} for interdiction games fulfilling a certain monotonicity assumption. We propose such a reformulation and develop a branch-and-cut algorithm to solve it exactly. In addition, we develop a genetic algorithm in which the fitness value of an individual is estimated heuristically using a solution pool. We conduct a computational study by creating instances of \OIG using a set of TSP instances in the literature. The results show that the performance of the branch-and-cut method can be drastically improved by means of the proposed enhancement strategies. The genetic algorithm yields solutions that are similar to the B\&C solutions in terms of the objective function value, in reasonable time.   

There are various avenues for further work: It could be interesting to try to design other exact solution algorithms, which do not use a reformulation based on interdiction cuts. Moreover, the development of other heuristic algorithms could also be a fruitful direction for further work. In particular, the fact that obtaining the objective function value of any feasible solution requires the solution of the NP-hard orienteering problem presents an intriguing challenge in this context. 
Moreover, the \OIG could be extended by adding (topological) constraints to the leader problem, e.g., it could be imposed that the leader also needs to solve an orienteering problem, and the nodes visited by the leader in her or his tour are the nodes which are then interdicted for the follower. Finally, it could also be interesting to consider other orienteering problems, such as the team orienteering problem, the orienteering problem with time windows, and the multi-period orienteering problem in a similar game-theoretic setting.

\paragraph{Acknowledgments}  E. \'Alvarez-Miranda  acknowledges the support of the National Agency of Research
and Development (ANID), Chile, through the grant FONDECYT N.1180670 and through the Complex Engineering Systems Institute ANID PIA/BASAL AFB180003. 

\bibliographystyle{elsarticle-harv}
\bibliography{bibOrienteering,bibInterdiction,bibCanvassing,bibPatrolling}

\begin{thebibliography}{52}
\expandafter\ifx\csname natexlab\endcsname\relax\def\natexlab#1{#1}\fi
\providecommand{\url}[1]{\texttt{#1}}
\providecommand{\href}[2]{#2}
\providecommand{\path}[1]{#1}
\providecommand{\DOIprefix}{doi:}
\providecommand{\ArXivprefix}{arXiv:}
\providecommand{\URLprefix}{URL: }
\providecommand{\Pubmedprefix}{pmid:}
\providecommand{\doi}[1]{\href{http://dx.doi.org/#1}{\path{#1}}}
\providecommand{\Pubmed}[1]{\href{pmid:#1}{\path{#1}}}
\providecommand{\bibinfo}[2]{#2}
\ifx\xfnm\relax \def\xfnm[#1]{\unskip,\space#1}\fi
\bibitem[{Beck et~al.(2023)Beck, Ljubi{\'c} and Schmidt}]{beck2023survey}
\bibinfo{author}{Beck, Y.}, \bibinfo{author}{Ljubi{\'c}, I.},
  \bibinfo{author}{Schmidt, M.}, \bibinfo{year}{2023}.
\newblock \bibinfo{title}{A survey on bilevel optimization under uncertainty}.
\newblock \bibinfo{journal}{European Journal of Operational Research}
  \bibinfo{volume}{311}, \bibinfo{pages}{401--426}.
\bibitem[{Bhatti et~al.(2019)Bhatti, Dahlgaard, Hansen and
  Hansen}]{bhatti2019door}
\bibinfo{author}{Bhatti, Y.}, \bibinfo{author}{Dahlgaard, J.},
  \bibinfo{author}{Hansen, J.}, \bibinfo{author}{Hansen, K.},
  \bibinfo{year}{2019}.
\newblock \bibinfo{title}{Is door-to-door canvassing effective in europe?
  evidence from a meta-study across six european countries}.
\newblock \bibinfo{journal}{British Journal of Political Science}
  \bibinfo{volume}{49}, \bibinfo{pages}{279--290}.
\bibitem[{Bidgoli and Kheirkhah(2018)}]{bidgoli2018arc}
\bibinfo{author}{Bidgoli, M.M.}, \bibinfo{author}{Kheirkhah, A.},
  \bibinfo{year}{2018}.
\newblock \bibinfo{title}{An arc interdiction vehicle routing problem with
  information asymmetry}.
\newblock \bibinfo{journal}{Computers \& Industrial Engineering}
  \bibinfo{volume}{115}, \bibinfo{pages}{520--531}.
\bibitem[{Brown et~al.(2006)Brown, Carlyle, Salmer{\'o}n and
  Wood}]{brown2006defending}
\bibinfo{author}{Brown, G.}, \bibinfo{author}{Carlyle, M.},
  \bibinfo{author}{Salmer{\'o}n, J.}, \bibinfo{author}{Wood, K.},
  \bibinfo{year}{2006}.
\newblock \bibinfo{title}{Defending critical infrastructure}.
\newblock \bibinfo{journal}{Interfaces} \bibinfo{volume}{36},
  \bibinfo{pages}{530--544}.
\bibitem[{Calvete et~al.(2011)Calvete, Gal{\'e} and
  Oliveros}]{calvete2011bilevel}
\bibinfo{author}{Calvete, H.I.}, \bibinfo{author}{Gal{\'e}, C.},
  \bibinfo{author}{Oliveros, M.J.}, \bibinfo{year}{2011}.
\newblock \bibinfo{title}{Bilevel model for production--distribution planning
  solved by using ant colony optimization}.
\newblock \bibinfo{journal}{Computers \& Operations Research}
  \bibinfo{volume}{38}, \bibinfo{pages}{320--327}.
\bibitem[{Camacho-Vallejo et~al.(2022)Camacho-Vallejo, L{\'o}pez-Vera, Smith
  and Gonz{\'a}lez-Velarde}]{camacho2022tabu}
\bibinfo{author}{Camacho-Vallejo, J.F.}, \bibinfo{author}{L{\'o}pez-Vera, L.},
  \bibinfo{author}{Smith, A.E.}, \bibinfo{author}{Gonz{\'a}lez-Velarde, J.L.},
  \bibinfo{year}{2022}.
\newblock \bibinfo{title}{A tabu search algorithm to solve a green logistics
  bi-objective bi-level problem}.
\newblock \bibinfo{journal}{Annals of Operations Research}
  \bibinfo{volume}{316}, \bibinfo{pages}{927--953}.
\bibitem[{Cerulli et~al.(2023)Cerulli, Archetti, Fernandez and
  Ljubic}]{cerulli2023bilevel}
\bibinfo{author}{Cerulli, M.}, \bibinfo{author}{Archetti, C.},
  \bibinfo{author}{Fernandez, E.}, \bibinfo{author}{Ljubic, I.},
  \bibinfo{year}{2023}.
\newblock \bibinfo{title}{A bilevel approach for compensation and routing
  decisions in last-mile delivery}.
\newblock \bibinfo{journal}{arXiv preprint arXiv:2304.09170} .
\bibitem[{Chao et~al.(1996)Chao, Golden and Wasil}]{chao1996team}
\bibinfo{author}{Chao, I.M.}, \bibinfo{author}{Golden, B.L.},
  \bibinfo{author}{Wasil, E.A.}, \bibinfo{year}{1996}.
\newblock \bibinfo{title}{The team orienteering problem}.
\newblock \bibinfo{journal}{European Journal of Operational Research}
  \bibinfo{volume}{88}, \bibinfo{pages}{464--474}.
\bibitem[{Church et~al.(2004)Church, Scaparra and
  Middleton}]{church2004identifying}
\bibinfo{author}{Church, R.L.}, \bibinfo{author}{Scaparra, M.P.},
  \bibinfo{author}{Middleton, R.S.}, \bibinfo{year}{2004}.
\newblock \bibinfo{title}{Identifying critical infrastructure: the median and
  covering facility interdiction problems}.
\newblock \bibinfo{journal}{Annals of the Association of American Geographers}
  \bibinfo{volume}{94}, \bibinfo{pages}{491--502}.
\bibitem[{Cruz-Garc{\'\i}a et~al.(2021)Cruz-Garc{\'\i}a,
  Mart{\'\i}nez-Far{\'\i}as, Santill{\'a}n-Hern{\'a}ndez and
  Rangel}]{cruz2021mathematical}
\bibinfo{author}{Cruz-Garc{\'\i}a, S.},
  \bibinfo{author}{Mart{\'\i}nez-Far{\'\i}as, F.},
  \bibinfo{author}{Santill{\'a}n-Hern{\'a}ndez, A.}, \bibinfo{author}{Rangel,
  E.}, \bibinfo{year}{2021}.
\newblock \bibinfo{title}{{Mathematical home burglary model with stochastic
  long crime trips and patrolling: Applied to Mexico City}}.
\newblock \bibinfo{journal}{Applied Mathematics and Computation}
  \bibinfo{volume}{396}, \bibinfo{pages}{125865}.
\bibitem[{Dantzig et~al.(1954)Dantzig, Fulkerson and
  Johnson}]{dantzig1954solution}
\bibinfo{author}{Dantzig, G.}, \bibinfo{author}{Fulkerson, R.},
  \bibinfo{author}{Johnson, S.}, \bibinfo{year}{1954}.
\newblock \bibinfo{title}{Solution of a large-scale traveling-salesman
  problem}.
\newblock \bibinfo{journal}{Journal of the Operations Research Society of
  America} \bibinfo{volume}{2}, \bibinfo{pages}{393--410}.
\bibitem[{Dantzig(1957)}]{dantzig1957discrete}
\bibinfo{author}{Dantzig, G.B.}, \bibinfo{year}{1957}.
\newblock \bibinfo{title}{Discrete-variable extremum problems}.
\newblock \bibinfo{journal}{Operations Research} \bibinfo{volume}{5},
  \bibinfo{pages}{266--288}.
\bibitem[{Dempe and Zemkoho(2020)}]{dempe2020bilevel}
\bibinfo{author}{Dempe, S.}, \bibinfo{author}{Zemkoho, A.},
  \bibinfo{year}{2020}.
\newblock \bibinfo{title}{Bilevel optimization}.
\newblock \bibinfo{publisher}{Springer}.
\bibitem[{DeNegre(2011)}]{denegre2011interdiction}
\bibinfo{author}{DeNegre, S.}, \bibinfo{year}{2011}.
\newblock \bibinfo{title}{Interdiction and discrete bilevel linear
  programming}.
\newblock \bibinfo{publisher}{Lehigh University PhD thesis}.
\bibitem[{Fischetti et~al.(2017)Fischetti, Ljubi{\'c}, Monaci and
  Sinnl}]{fischetti2017new}
\bibinfo{author}{Fischetti, M.}, \bibinfo{author}{Ljubi{\'c}, I.},
  \bibinfo{author}{Monaci, M.}, \bibinfo{author}{Sinnl, M.},
  \bibinfo{year}{2017}.
\newblock \bibinfo{title}{A new general-purpose algorithm for mixed-integer
  bilevel linear programs}.
\newblock \bibinfo{journal}{Operations Research} \bibinfo{volume}{65},
  \bibinfo{pages}{1615--1637}.
\bibitem[{Fischetti et~al.(2019)Fischetti, Ljubi{\'c}, Monaci and
  Sinnl}]{fischetti2019interdiction}
\bibinfo{author}{Fischetti, M.}, \bibinfo{author}{Ljubi{\'c}, I.},
  \bibinfo{author}{Monaci, M.}, \bibinfo{author}{Sinnl, M.},
  \bibinfo{year}{2019}.
\newblock \bibinfo{title}{Interdiction games and monotonicity, with application
  to knapsack problems}.
\newblock \bibinfo{journal}{INFORMS Journal on Computing} \bibinfo{volume}{31},
  \bibinfo{pages}{390--410}.
\bibitem[{Fischetti et~al.(1998)Fischetti, Salazar-Gonzalez and
  Toth}]{fischetti1998solving}
\bibinfo{author}{Fischetti, M.}, \bibinfo{author}{Salazar-Gonzalez, J.},
  \bibinfo{author}{Toth, P.}, \bibinfo{year}{1998}.
\newblock \bibinfo{title}{Solving the orienteering problem through
  branch-and-cut}.
\newblock \bibinfo{journal}{INFORMS Journal on Computing} \bibinfo{volume}{10},
  \bibinfo{pages}{133--148}.
\bibitem[{Furini et~al.(2019)Furini, Ljubi{\'c}, Martin and
  San~Segundo}]{furini2019maximum}
\bibinfo{author}{Furini, F.}, \bibinfo{author}{Ljubi{\'c}, I.},
  \bibinfo{author}{Martin, S.}, \bibinfo{author}{San~Segundo, P.},
  \bibinfo{year}{2019}.
\newblock \bibinfo{title}{The maximum clique interdiction problem}.
\newblock \bibinfo{journal}{European Journal of Operational Research}
  \bibinfo{volume}{277}, \bibinfo{pages}{112--127}.
\bibitem[{Golden et~al.(1987)Golden, Levy and Vohra}]{golden1987orienteering}
\bibinfo{author}{Golden, B.}, \bibinfo{author}{Levy, L.},
  \bibinfo{author}{Vohra, R.}, \bibinfo{year}{1987}.
\newblock \bibinfo{title}{The orienteering problem}.
\newblock \bibinfo{journal}{Naval Research Logistics} \bibinfo{volume}{34},
  \bibinfo{pages}{307--318}.
\bibitem[{Gunawan et~al.(2016)Gunawan, Lau and
  Vansteenwegen}]{gunawan2016orienteering}
\bibinfo{author}{Gunawan, A.}, \bibinfo{author}{Lau, H.C.},
  \bibinfo{author}{Vansteenwegen, P.}, \bibinfo{year}{2016}.
\newblock \bibinfo{title}{Orienteering problem: A survey of recent variants,
  solution approaches and applications}.
\newblock \bibinfo{journal}{European Journal of Operational Research}
  \bibinfo{volume}{255}, \bibinfo{pages}{315--332}.
\bibitem[{Ilhan et~al.(2008)Ilhan, Iravani and Daskin}]{ilhan2008orienteering}
\bibinfo{author}{Ilhan, T.}, \bibinfo{author}{Iravani, S.M.},
  \bibinfo{author}{Daskin, M.S.}, \bibinfo{year}{2008}.
\newblock \bibinfo{title}{The orienteering problem with stochastic profits}.
\newblock \bibinfo{journal}{IIE Transactions} \bibinfo{volume}{40},
  \bibinfo{pages}{406--421}.
\bibitem[{Israeli and Wood(2002)}]{israeli2002shortest}
\bibinfo{author}{Israeli, E.}, \bibinfo{author}{Wood, R.K.},
  \bibinfo{year}{2002}.
\newblock \bibinfo{title}{Shortest-path network interdiction}.
\newblock \bibinfo{journal}{Networks: An International Journal}
  \bibinfo{volume}{40}, \bibinfo{pages}{97--111}.
\bibitem[{Jia et~al.(2021)Jia, Mei and Zhang}]{jia2021bilevel}
\bibinfo{author}{Jia, Y.H.}, \bibinfo{author}{Mei, Y.}, \bibinfo{author}{Zhang,
  M.}, \bibinfo{year}{2021}.
\newblock \bibinfo{title}{A bilevel ant colony optimization algorithm for
  capacitated electric vehicle routing problem}.
\newblock \bibinfo{journal}{IEEE Transactions on Cybernetics}
  \bibinfo{volume}{52}, \bibinfo{pages}{10855--10868}.
\bibitem[{Keskin et~al.(2012)Keskin, Li, Steil and
  Spiller}]{keskin2012analysis}
\bibinfo{author}{Keskin, B.B.}, \bibinfo{author}{Li, S.R.},
  \bibinfo{author}{Steil, D.}, \bibinfo{author}{Spiller, S.},
  \bibinfo{year}{2012}.
\newblock \bibinfo{title}{Analysis of an integrated maximum covering and patrol
  routing problem}.
\newblock \bibinfo{journal}{Transportation Research Part E: Logistics and
  Transportation Review} \bibinfo{volume}{48}, \bibinfo{pages}{215--232}.
\bibitem[{Kheirkhah et~al.(2016a)Kheirkhah, Navidi and
  Bidgoli}]{kheirkhah2016improved}
\bibinfo{author}{Kheirkhah, A.}, \bibinfo{author}{Navidi, H.},
  \bibinfo{author}{Bidgoli, M.M.}, \bibinfo{year}{2016}a.
\newblock \bibinfo{title}{An improved benders decomposition algorithm for an
  arc interdiction vehicle routing problem}.
\newblock \bibinfo{journal}{IEEE Transactions on Engineering Management}
  \bibinfo{volume}{63}, \bibinfo{pages}{259--273}.
\bibitem[{Kheirkhah et~al.(2016b)Kheirkhah, Navidi and
  Messi~Bidgoli}]{kheirkhah2016bi}
\bibinfo{author}{Kheirkhah, A.}, \bibinfo{author}{Navidi, H.},
  \bibinfo{author}{Messi~Bidgoli, M.}, \bibinfo{year}{2016}b.
\newblock \bibinfo{title}{A bi-level network interdiction model for solving the
  hazmat routing problem}.
\newblock \bibinfo{journal}{International Journal of Production Research}
  \bibinfo{volume}{54}, \bibinfo{pages}{459--471}.
\bibitem[{Kleinert et~al.(2021)Kleinert, Labb{\'e}, Ljubi{\'c} and
  Schmidt}]{kleinert2021survey}
\bibinfo{author}{Kleinert, T.}, \bibinfo{author}{Labb{\'e}, M.},
  \bibinfo{author}{Ljubi{\'c}, I.}, \bibinfo{author}{Schmidt, M.},
  \bibinfo{year}{2021}.
\newblock \bibinfo{title}{A survey on mixed-integer programming techniques in
  bilevel optimization}.
\newblock \bibinfo{journal}{EURO Journal on Computational Optimization}
  \bibinfo{volume}{9}, \bibinfo{pages}{100007}.
\bibitem[{Labadie et~al.(2012)Labadie, Mansini, Melechovsk{\`y} and
  Calvo}]{labadie2012team}
\bibinfo{author}{Labadie, N.}, \bibinfo{author}{Mansini, R.},
  \bibinfo{author}{Melechovsk{\`y}, J.}, \bibinfo{author}{Calvo, R.W.},
  \bibinfo{year}{2012}.
\newblock \bibinfo{title}{The team orienteering problem with time windows: An
  lp-based granular variable neighborhood search}.
\newblock \bibinfo{journal}{European Journal of Operational Research}
  \bibinfo{volume}{220}, \bibinfo{pages}{15--27}.
\bibitem[{Laporte and Martello(1990)}]{laporte1990selective}
\bibinfo{author}{Laporte, G.}, \bibinfo{author}{Martello, S.},
  \bibinfo{year}{1990}.
\newblock \bibinfo{title}{The selective travelling salesman problem}.
\newblock \bibinfo{journal}{Discrete Applied Mathematics} \bibinfo{volume}{26},
  \bibinfo{pages}{193--207}.
\bibitem[{Lim and Smith(2007)}]{lim2007algorithms}
\bibinfo{author}{Lim, C.}, \bibinfo{author}{Smith, J.C.}, \bibinfo{year}{2007}.
\newblock \bibinfo{title}{Algorithms for discrete and continuous multicommodity
  flow network interdiction problems}.
\newblock \bibinfo{journal}{IIE Transactions} \bibinfo{volume}{39},
  \bibinfo{pages}{15--26}.
\bibitem[{Lozano et~al.(2017)Lozano, Smith and Kurz}]{lozano2017solving}
\bibinfo{author}{Lozano, L.}, \bibinfo{author}{Smith, J.},
  \bibinfo{author}{Kurz, M.}, \bibinfo{year}{2017}.
\newblock \bibinfo{title}{Solving the traveling salesman problem with
  interdiction and fortification}.
\newblock \bibinfo{journal}{Operations Research Letters} \bibinfo{volume}{45},
  \bibinfo{pages}{210--216}.
\bibitem[{Lozano and Smith(2017)}]{lozano2017backward}
\bibinfo{author}{Lozano, L.}, \bibinfo{author}{Smith, J.C.},
  \bibinfo{year}{2017}.
\newblock \bibinfo{title}{A backward sampling framework for interdiction
  problems with fortification}.
\newblock \bibinfo{journal}{INFORMS Journal on Computing} \bibinfo{volume}{29},
  \bibinfo{pages}{123--139}.
\bibitem[{Lupfer and Price(1972)}]{lupfer1972merits}
\bibinfo{author}{Lupfer, M.}, \bibinfo{author}{Price, D.},
  \bibinfo{year}{1972}.
\newblock \bibinfo{title}{On the merits of face-to-face campaigning}.
\newblock \bibinfo{journal}{Social Science Quarterly} ,
  \bibinfo{pages}{534--543}.
\bibitem[{Marinakis and Marinaki(2008)}]{marinakis2008bilevel}
\bibinfo{author}{Marinakis, Y.}, \bibinfo{author}{Marinaki, M.},
  \bibinfo{year}{2008}.
\newblock \bibinfo{title}{A bilevel genetic algorithm for a real life location
  routing problem}.
\newblock \bibinfo{journal}{International Journal of Logistics: Research and
  Applications} \bibinfo{volume}{11}, \bibinfo{pages}{49--65}.
\bibitem[{Marinakis et~al.(2007)Marinakis, Migdalas and
  Pardalos}]{marinakis2007new}
\bibinfo{author}{Marinakis, Y.}, \bibinfo{author}{Migdalas, A.},
  \bibinfo{author}{Pardalos, P.M.}, \bibinfo{year}{2007}.
\newblock \bibinfo{title}{A new bilevel formulation for the vehicle routing
  problem and a solution method using a genetic algorithm}.
\newblock \bibinfo{journal}{Journal of Global Optimization}
  \bibinfo{volume}{38}, \bibinfo{pages}{555--580}.
\bibitem[{Nadizadeh and Sabzevari~Zadeh(2021)}]{nadizadeh2021bi}
\bibinfo{author}{Nadizadeh, A.}, \bibinfo{author}{Sabzevari~Zadeh, A.},
  \bibinfo{year}{2021}.
\newblock \bibinfo{title}{A bi-level model and memetic algorithm for arc
  interdiction location-routing problem}.
\newblock \bibinfo{journal}{Computational and Applied Mathematics}
  \bibinfo{volume}{40}, \bibinfo{pages}{1--44}.
\bibitem[{Nikolakopoulos(2015)}]{nikolakopoulos2015metaheuristic}
\bibinfo{author}{Nikolakopoulos, A.}, \bibinfo{year}{2015}.
\newblock \bibinfo{title}{A metaheuristic reconstruction algorithm for solving
  bi-level vehicle routing problems with backhauls for army rapid fielding}.
\newblock \bibinfo{journal}{Military Logistics: Research Advances and Future
  Trends} , \bibinfo{pages}{141--157}.
\bibitem[{Ning and Su(2017)}]{ning2017multilevel}
\bibinfo{author}{Ning, Y.}, \bibinfo{author}{Su, T.}, \bibinfo{year}{2017}.
\newblock \bibinfo{title}{A multilevel approach for modelling vehicle routing
  problem with uncertain travelling time}.
\newblock \bibinfo{journal}{Journal of Intelligent Manufacturing}
  \bibinfo{volume}{28}, \bibinfo{pages}{683--688}.
\bibitem[{Nyman(2017)}]{nyman2017door}
\bibinfo{author}{Nyman, P.}, \bibinfo{year}{2017}.
\newblock \bibinfo{title}{{Door-to-door canvassing in the European elections:
  Evidence from a Swedish field experiment}}.
\newblock \bibinfo{journal}{Electoral Studies} \bibinfo{volume}{45},
  \bibinfo{pages}{110--118}.
\bibitem[{Parvasi et~al.(2019)Parvasi, Tavakkoli-Moghaddam, Taleizadeh and
  Soveizy}]{parvasi2019bi}
\bibinfo{author}{Parvasi, S.P.}, \bibinfo{author}{Tavakkoli-Moghaddam, R.},
  \bibinfo{author}{Taleizadeh, A.A.}, \bibinfo{author}{Soveizy, M.},
  \bibinfo{year}{2019}.
\newblock \bibinfo{title}{A bi-level bi-objective mathematical model for stop
  location in a school bus routing problem}.
\newblock \bibinfo{journal}{IFAC-PapersOnLine} \bibinfo{volume}{52},
  \bibinfo{pages}{1120--1125}.
\bibitem[{Sadati et~al.(2020a)Sadati, Aksen and Aras}]{sadati2020r}
\bibinfo{author}{Sadati, M.E.H.}, \bibinfo{author}{Aksen, D.},
  \bibinfo{author}{Aras, N.}, \bibinfo{year}{2020}a.
\newblock \bibinfo{title}{The r-interdiction selective multi-depot vehicle
  routing problem}.
\newblock \bibinfo{journal}{International Transactions in Operational Research}
  \bibinfo{volume}{27}, \bibinfo{pages}{835--866}.
\bibitem[{Sadati et~al.(2020b)Sadati, Aksen and Aras}]{sadati2020trilevel}
\bibinfo{author}{Sadati, M.E.H.}, \bibinfo{author}{Aksen, D.},
  \bibinfo{author}{Aras, N.}, \bibinfo{year}{2020}b.
\newblock \bibinfo{title}{A trilevel r-interdiction selective multi-depot
  vehicle routing problem with depot protection}.
\newblock \bibinfo{journal}{Computers \& Operations Research}
  \bibinfo{volume}{123}, \bibinfo{pages}{104996}.
\bibitem[{Sefair et~al.(2017)Sefair, Smith, Acevedo and
  Fletcher~Jr}]{sefair2017defender}
\bibinfo{author}{Sefair, J.A.}, \bibinfo{author}{Smith, J.C.},
  \bibinfo{author}{Acevedo, M.A.}, \bibinfo{author}{Fletcher~Jr, R.J.},
  \bibinfo{year}{2017}.
\newblock \bibinfo{title}{A defender-attacker model and algorithm for
  maximizing weighted expected hitting time with application to conservation
  planning}.
\newblock \bibinfo{journal}{IISE Transactions} \bibinfo{volume}{49},
  \bibinfo{pages}{1112--1128}.
\bibitem[{Smith and Song(2020)}]{smith2020survey}
\bibinfo{author}{Smith, J.C.}, \bibinfo{author}{Song, Y.},
  \bibinfo{year}{2020}.
\newblock \bibinfo{title}{A survey of network interdiction models and
  algorithms}.
\newblock \bibinfo{journal}{European Journal of Operational Research}
  \bibinfo{volume}{283}, \bibinfo{pages}{797--811}.
\bibitem[{Tahernejad et~al.(2020)Tahernejad, Ralphs and
  DeNegre}]{tahernejad2020branch}
\bibinfo{author}{Tahernejad, S.}, \bibinfo{author}{Ralphs, T.K.},
  \bibinfo{author}{DeNegre, S.T.}, \bibinfo{year}{2020}.
\newblock \bibinfo{title}{A branch-and-cut algorithm for mixed integer bilevel
  linear optimization problems and its implementation}.
\newblock \bibinfo{journal}{Mathematical Programming Computation}
  \bibinfo{volume}{12}, \bibinfo{pages}{529--568}.
\bibitem[{Tang et~al.(2016)Tang, Richard and Smith}]{tang2016class}
\bibinfo{author}{Tang, Y.}, \bibinfo{author}{Richard, J.P.P.},
  \bibinfo{author}{Smith, J.C.}, \bibinfo{year}{2016}.
\newblock \bibinfo{title}{A class of algorithms for mixed-integer bilevel
  min--max optimization}.
\newblock \bibinfo{journal}{Journal of Global Optimization}
  \bibinfo{volume}{66}, \bibinfo{pages}{225--262}.
\bibitem[{Tan{\i}nm{\i}{\c{s}} et~al.(2022)Tan{\i}nm{\i}{\c{s}}, Aras and
  Alt{\i}nel}]{taninmics2022improved}
\bibinfo{author}{Tan{\i}nm{\i}{\c{s}}, K.}, \bibinfo{author}{Aras, N.},
  \bibinfo{author}{Alt{\i}nel, {\.I}.K.}, \bibinfo{year}{2022}.
\newblock \bibinfo{title}{Improved x-space algorithm for min-max bilevel
  problems with an application to misinformation spread in social networks}.
\newblock \bibinfo{journal}{European Journal of Operational Research}
  \bibinfo{volume}{297}, \bibinfo{pages}{40--52}.
\bibitem[{Vansteenwegen et~al.(2011)Vansteenwegen, Souffriau and
  Van~Oudheusden}]{vansteenwegen2011orienteering}
\bibinfo{author}{Vansteenwegen, P.}, \bibinfo{author}{Souffriau, W.},
  \bibinfo{author}{Van~Oudheusden, D.}, \bibinfo{year}{2011}.
\newblock \bibinfo{title}{The orienteering problem: a survey}.
\newblock \bibinfo{journal}{European Journal of Operational Research}
  \bibinfo{volume}{209}, \bibinfo{pages}{1--10}.
\bibitem[{Von~Stackelberg(1952)}]{von1952theory}
\bibinfo{author}{Von~Stackelberg, H.}, \bibinfo{year}{1952}.
\newblock \bibinfo{title}{The theory of the market economy}.
\newblock \bibinfo{publisher}{Oxford University Press}.
\bibitem[{Wang and Xu(2017)}]{wang2017watermelon}
\bibinfo{author}{Wang, L.}, \bibinfo{author}{Xu, P.}, \bibinfo{year}{2017}.
\newblock \bibinfo{title}{The watermelon algorithm for the bilevel integer
  linear programming problem}.
\newblock \bibinfo{journal}{SIAM Journal on Optimization} \bibinfo{volume}{27},
  \bibinfo{pages}{1403--1430}.
\bibitem[{Wollmer(1964)}]{wollmer1964removing}
\bibinfo{author}{Wollmer, R.}, \bibinfo{year}{1964}.
\newblock \bibinfo{title}{Removing arcs from a network}.
\newblock \bibinfo{journal}{Operations Research} \bibinfo{volume}{12},
  \bibinfo{pages}{934--940}.
\bibitem[{Wood(1993)}]{wood1993deterministic}
\bibinfo{author}{Wood, R.K.}, \bibinfo{year}{1993}.
\newblock \bibinfo{title}{Deterministic network interdiction}.
\newblock \bibinfo{journal}{Mathematical and Computer Modelling}
  \bibinfo{volume}{17}, \bibinfo{pages}{1--18}.

\end{thebibliography}

\end{document}